\documentclass{amsart}
\usepackage[utf8]{inputenc}
\usepackage[T1]{fontenc}
\usepackage[all]{xy}
\usepackage{eucal,enumerate}

\usepackage{tikz-cd}
\usepackage{amssymb}
\usepackage{mathtools}
\usepackage{hyperref}
\usepackage{epigraph}

\usepackage[a4paper,margin=1.2in]{geometry}

\title[Algebraic Integrability]{On Algebraic Integrability\\
of Vector Fields \\
with Rational Function Coefficients\\
that Separate Variables}
\author{Przemysław Grabowski}
\date{\today}

\newcommand{\cO}{\mathcal{O}}
\newcommand{\F}{\mathcal{F}}

\newcommand{\op}{\operatorname}
\renewcommand{\phi}{\varphi}

  \newtheorem{thm}{Theorem}[section]
  \newtheorem{lemma}[thm]{Lemma}
  \newtheorem{prop}[thm]{Proposition}
  \newtheorem{cor}[thm]{Corollary}

  \theoremstyle{definition} 
  \newtheorem{defin}[thm]{Definition}
  \newtheorem{remark}[thm]{Remark}
  \newtheorem{example}[thm]{Example}
  \newtheorem{problem}[thm]{Problem}
  \newtheorem{conj}[thm]{Conjecture}

\begin{document}

\begin{abstract}
    We work over a field of characteristic zero - primarily over an algebraic closure of the field of rational numbers.
    
    We prove a necessary and sufficient condition for a vector field whose coefficients are rational functions separating variables to be algebraically integrable, that is, the subring of rational functions killed by this vector field, its ring of first integrals, is of maximal possible dimension. 
    
    We do it arithmetically by reducing the vector field modulo almost all primes.
    In particular, we verify the generalized Grothendieck--Katz $p$-curvature conjecture for foliations defined by these vector fields.

    Finally, we use the outcome of that verification to provide explicit formulas for coefficients of all algebraically integrable vector fields separating variables, and their first integrals.
\end{abstract}

\maketitle
\tableofcontents

\section{Introduction}
\epigraph{If it bleeds, we can kill it.}
{Predator (1987)}

We work on the following classical algebraic problem.

\begin{problem}[Existence of First Integrals Problem]\label{problem1}
    Let $D$ be a derivation on a field $K$ over a field $k\subset K$. When does there exist an element $F\in K$, not in $k^s$, such that $D(F)=0$?
\end{problem}

We solve this problem completely for a particular class of derivations, and we explicitly describe all first integrals in terms of their coefficients.

\begin{defin}[We study these derivations]
    Let $K$ be a field of characteristic zero with a transcendental basis $x_1,x_2,\ldots, x_n$, where $n\in \mathbb{N}$, over a subfield $k$.

    We say that a derivation $D:K\to K$ over $k$ has \textbf{rational function coefficients that separate variables} if there exist non-zero rational functions in one variable $f_1,f_2,\ldots,f_n \in k(X)$ such that

    \begin{equation}\label{eq: formula - definition}
    D= f_1(x_1)\frac{\partial}{\partial x_1}+f_2(x_2)\frac{\partial}{\partial x_2}+\ldots+f_n(x_n)\frac{\partial}{\partial x_n}.
\end{equation}
\end{defin}

This class of derivations extends the case of separated variables from ODEs with a specialized type of $f_i$.
The following is a folklore concrete example from the discourse of Grothendieck--Katz $p$-curvature conjecture.

\begin{example}[Folklore Example]\label{ex: folclor example}
    A standard example is $D=x\frac{\partial}{\partial x}+ay\frac{\partial}{\partial y}$ with $a\in \overline{\mathbb{Q}}^{\times}$. 
    
    The answer: a nontrivial first integral $F$ exists if and only if $a\in \mathbb{Q}$. Moreover, some such functions can be given as $F=\left(\frac{y^{1/a}}{x}\right)^N=\op{exp}\left(N(\frac{1}{a}\op{log}(y)-\op{log}(x))\right)$ for any integer $N\ne 0$ such that $N/a$ is an integer.
\end{example}

In this paper, we generalize this answer from the above example \ref{ex: folclor example} to all $D$ of the above form \ref{eq: formula - definition} over any field $K$ of characteristic zero. Moreover, we also use our results to provide even more explicit formulas for the coefficients $f_i$ in the case of $K=\mathbb{Q}(x_1,x_2,\ldots,x_n)$ and $k=\mathbb{Q}$.

\subsection{Our Results}
For simplicity, here, we only present $n=2$, but $n\ge 2$ is analogous.
We begin with explicit formulas for $K=\mathbb{Q}(x_1,x_2,\ldots,x_n)$ and $k=\mathbb{Q}$, so $K=\mathbb{Q}(x,y)$, Proof \ref{thm: explicit, n=2, both L, over rational number}.

\begin{thm}[Explicit Main Theorem for $n=2$ over Rational Numbers]\label{thm: explicit main thm in intro}
    Let $f,g\in \mathbb{Q}(X)$ be nonzero rational functions.
    Let $D=f(x)\frac{\partial}{\partial x}+g(y)\frac{\partial}{\partial y}$ be a derivation on the field $\mathbb{Q}(x,y)$.

    Then the following are equivalent:
    \begin{enumerate}
        \item There exists a non-constant $F\in \mathbb{Q}(x,y)$ such that $D(F)=0$.
        \item One of the following is true:
    \begin{enumerate}
        \item All complex residues of $\frac{1}{f}$ and $\frac{1}{g}$ are zero.
        \item There exists irreducible polynomials $f_1,\ldots,f_n\in \mathbb{Q}[x],g_1,\ldots,g_m\in\mathbb{Q}[y]$ and rational numbers $c_1,\ldots,c_n, d_1,\ldots,d_m \in\mathbb{Q}$ such that
        \[
        \frac{1}{f}=\sum_{i=1}^n \frac{c_i f_i'}{f_i}, \hspace{1cm} \frac{1}{g}=\sum_{i=1}^m \frac{d_i g_i'}{g_i}.
        \]
        \item There exists a quadratic extension $L/\mathbb{Q}$, irreducible polynomials $f_1,\ldots,f_n\in \mathbb{Q}[x]$, $g_1,\ldots,g_m\in\mathbb{Q}[y]$ that split over $L$ into conjugate factors $f_i=f_{i1}f_{i2}$ and $g_j=g_{j1}g_{j2}$, and $c_1,\ldots,c_n\in L\setminus \mathbb{Q}$, $d_1,\ldots,d_m\in L\setminus \mathbb{Q}$ satisfying $c_i+\overline{c_i}=0,d_j+\overline{d_j}=0$ such that
        \[
        \frac{1}{f}=\sum_{i=1}^n \frac{c_i (f_{i1}'f_{i2}-f_{i2}'f_{i1})}{f_i}, 
        \hspace{1cm} 
        \frac{1}{g}=\sum_{i=1}^m \frac{d_i (g_{i1}'g_{i2}-g_{i2}'g_{i1})}{g_i}.
        \]
    \end{enumerate}  
    \end{enumerate}
\end{thm}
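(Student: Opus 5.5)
The plan is to reduce the problem to the closed logarithmic $1$-form attached to $D$. A function $F$ satisfies $D(F)=0$ exactly when $dF$ is proportional to $\omega=\frac{dx}{f(x)}-\frac{dy}{g(y)}$, which is closed. Over $\overline{\mathbb{Q}}$ I decompose
\[
\frac1f=R'(x)+\sum_{\alpha}\frac{a_\alpha}{x-\alpha},\qquad \frac1g=S'(y)+\sum_\beta\frac{b_\beta}{y-\beta},
\]
with $R,S$ rational and $a_\alpha,b_\beta$ the complex residues. A formal primitive of $\omega$ is then
\[
\Phi=R(x)-S(y)+\sum a_\alpha\log(x-\alpha)-\sum b_\beta\log(y-\beta).
\]
The core intermediate claim is the following: a nonconstant rational first integral exists iff either all $a_\alpha,b_\beta$ vanish, or $R$ and $S$ are constant and all $a_\alpha,b_\beta$ lie in a single line $\lambda\mathbb{Q}\subset\overline{\mathbb{Q}}$. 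I would then translate this into the conditions (a)--(c) of Theorem \ref{thm: explicit main thm in intro} using Galois descent.

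\textbf{Sufficiency.} If all residues vanish, $\Phi=R(x)-S(y)$ is itself rational and killed by $D$. For (b) and (c), write the residues as $\lambda q$ with $q\in\mathbb{Q}$ and clear denominators by an integer $N$. Then $F=\exp(N\Phi/\lambda)$ is a product of integer powers of the linear factors $x-\alpha$ and $y-\beta$. Hence $F\in\overline{\mathbb{Q}}(x,y)$, and $D(F)=\frac{N}{\lambda}F\,D(\Phi)=0$. To descend to $\mathbb{Q}(x,y)$ in case (c), I take the norm $F\cdot\overline F$, or $F+\overline F$ if the norm degenerates to a constant; this is a rational first integral defined over $\mathbb{Q}$. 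In case (b) the function $F$ is already $\prod f_i^{Nc_i}\prod g_j^{-Nd_j}$.

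\textbf{Necessity.} This is the main obstacle. Suppose a nonconstant rational $F$ with $D(F)=0$ exists. Then $dF=h\,\omega$ for some rational $h$. By Lüroth, the field of first integrals is $\mathbb{Q}(G)$ for some $G$, and locally on a simply connected domain $G=H(\Phi)$ for an analytic $H$. I would use monodromy next. Analytic continuation of $\Phi$ around $x=\alpha$ or $y=\beta$ shifts it by $2\pi i a_\alpha$ or $2\pi i b_\beta$, so $H$ is invariant under the additive group $\Lambda$ generated by these periods.
\begin{itemize}
\item If $\Lambda$ had $\mathbb{Q}$-rank at least $2$, or $\mathbb{Z}$-rank at least $2$, then $H$ would be constant: it would be periodic along a dense subgroup or doubly periodic, and the latter is incompatible with $G$ being algebraic.
\item So $\Lambda=2\pi i\lambda\mathbb{Z}$ up to finite index, and $H$ factors through $e^{\Phi/\lambda}$.
\item If $R$ or $S$ is nonconstant and $\Lambda\neq 0$, then $e^{R(x)/\lambda}$ appears in $G$, contradicting algebraicity. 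One sees this by growth along a pole of $R$, or by using that $e^{R}$ is transcendental over $\overline{\mathbb{Q}}(x)$ via Lindemann-type/Liouville arguments.
\end{itemize}
If the paper's main $p$-curvature result is available, I would instead reduce modulo $p$. The vanishing of $p$-curvature forces the residues mod $p$ to lie in $\mathbb{F}_p$ times a common element for almost all $p$, and Chebotarev/Kronecker then gives rationality of the ratios. I expect this step, ruling out independent residues and nonzero rational parts, to be the technical heart.

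\textbf{Descent to (b)/(c).} The residue set is stable under $\mathrm{Gal}(\overline{\mathbb{Q}}/\mathbb{Q})$, since $\sigma$ sends $a_\alpha$ to $a_{\sigma\alpha}$. Thus $\sigma$ acts on the line $\lambda\mathbb{Q}$ by multiplication by some $\chi(\sigma)\in\mathbb{Q}^\times$. The action permutes a finite set of nonzero residues, so $\chi$ has finite order, and therefore $\chi(\sigma)=\pm1$. Consequently $\lambda^2\in\mathbb{Q}$.
\begin{itemize}
\item If $\lambda\in\mathbb{Q}$, grouping roots of $f$ into Galois orbits, which carry equal rational residues, gives (b) with $f_i$ the minimal polynomials.
\item If $\lambda=\sqrt d\notin\mathbb{Q}$, set $L=\mathbb{Q}(\sqrt d)$. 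An element $\sigma$ acting by $\chi(\sigma)=-1$ sends residue $c$ to $-c$. So each $\mathbb{Q}$-irreducible factor splits over $L$ as $f_{i1}f_{i2}$, with residue $c_i$ on roots of $f_{i1}$ and $\overline{c_i}=-c_i$ on roots of $f_{i2}$. This yields $\frac{c_i f_{i1}'}{f_{i1}}-\frac{c_i f_{i2}'}{f_{i2}}=\frac{c_i(f_{i1}'f_{i2}-f_{i2}'f_{i1})}{f_i}$, which is (c).
\end{itemize}
The case where only one of $f,g$ has nonzero residues is excluded by the necessity argument, since then $H(\Phi)$ is forced to be a function of $x$ alone or of $y$ alone.
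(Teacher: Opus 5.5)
Your fallback for necessity is exactly what the paper does. The paper obtains your ``core intermediate claim'' by citing Theorem \ref{thm: main theorem n>=2}: (3)$\Rightarrow$(1) is Proposition \ref{prop: easy direction AlgIntConj}, and (1)$\Rightarrow$(2) is the $p$-power formula plus Theorem \ref{thm: kronecker ++}. Cite that theorem in full. It gives you $P_i=H_i=0$ for both $f$ and $g$ as soon as one residue is nonzero, not only rationality of residue ratios, and that is precisely the part your monodromy sketch handles least convincingly.

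The primary analytic route is not a proof as written. The $H$ with $G=H(\Phi)$ is only a germ. With $W=\overline{\mathbb{Q}}(G)$, closedness of $\omega$ gives $\omega=\psi(G)\,dG$ for a rational $\psi$ in one variable. So $H$ is a local inverse of the logarithmic primitive $\int\psi$, and it need not be single-valued. Monodromy of $\Phi$ therefore only relates different branches of $H$. Three of your claims presuppose a single-valued meromorphic $H$ on $\mathbb{C}$, which you never construct: that $H$ is $\Lambda$-invariant, that it factors through $e^{\Phi/\lambda}$, and the exclusion of $e^{R/\lambda}$. The lattice case is also not disposed of by ``doubly periodic is incompatible with algebraic'', since elliptic functions of abelian integrals are algebraic in general. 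You would need the logarithmic asymptotics $\Phi\sim a_\alpha\log(x-\alpha)$ near a pole to force $H$ to be constant.

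Where you genuinely differ from the paper is the Galois step and sufficiency. Your character $\chi\colon\op{Gal}(\overline{\mathbb{Q}}/\mathbb{Q})\to\mathbb{Q}^\times$, with finite image in $\{\pm1\}$, is a cleaner form of the order argument in Corollary \ref{cor: degree no more than 2}. Grouping the roots of each $\mathbb{Q}$-irreducible factor by the sign of their residue then yields (b)/(c) directly from the partial fraction decomposition over $\overline{\mathbb{Q}}$, with $c_i+\overline{c_i}=0$ automatic. The paper instead reconstructs the numerators by Lagrange interpolation (Lemma \ref{lem: explicit h for residues field of orders 1,2}) and extracts $c_i+\overline{c_i}=0$ afterwards from $\dim(V_f+V_g)=1$.

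For sufficiency, the paper simply invokes (2)$\Rightarrow$(3) of the main theorem and leaves the descent to $\mathbb{Q}(x,y)$ implicit; Lemma \ref{lem: eventually alg is alg: rank 1 for dim 2} supplies it. You instead write the integral down explicitly. Note that in case (c) one has $\sigma(F)=F^{\chi(\sigma)}$. So the norm is always $1$, and $F+F^{-1}$ is the Galois-invariant integral you want.
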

By adding some complexity to the conditions in the formulas for $\frac{1}{f},\frac{1}{g}$, one could obtain similar formulas for any field $K$. The condition will be a combination of (2b) and (2c) with a bit more control over $c_i,d_i$. We omit that. It is not worth it.

This explicit result follows from a combination of a simple Galois theory argument and our main result, which is the strongest elementary evidence towards the generalized Grothendieck--Katz $p$-curvature conjecture \ref{conj: alg int conj} so far. The general statement is Theorem \ref{thm: main theorem n>=2}, but the following is stated for $k=\overline{\mathbb{Q}}$ to remain the classical phrase ``for almost all primes'' instead of talking about open subsets of integral models. 

\begin{thm}[Main theorem for $n=2$]\label{thm: main thm in intro}
    Let $f,g\in \overline{\mathbb{Q}}(X)$ be nonzero rational functions.
    Let $PFD(\frac{1}{f})=P_1+L_1+H_1$ and $PFD(\frac{1}{g})=P_2+L_2+H_2$ be partial fraction decompositions over $\overline{\mathbb{Q}}$, see Theorem \ref{thm: PFD}, where $L_i=\sum_{j=1}^{d_i}\frac{c_{i,j}}{X-\alpha_{i,j}}$ for $i=1,2$.
    Let $D=f(x)\frac{\partial}{\partial x}+g(y)\frac{\partial}{\partial y}$ be a derivation.

    Then the following are equivalent:
    \begin{enumerate}
        \item For almost all primes, the derivation $D$ modulo $p$ is $p$-closed.
        \item One of the following is true:
    \begin{enumerate}
        \item We have $L_1=L_2=0$.
        \item We have $P_1=P_2=H_1=H_2=0$, and 
        
        there exists $\eta\in \overline{\mathbb{Q}}$, $\eta\ne 0$, such that for all $i,j$ we have $\eta c_{i,j}\in {\mathbb{Q}}$.
    \end{enumerate}
        \item There exists a non-constant $F\in \overline{\mathbb{Q}}(x,y)$ such that $D(F)=0$.
    \end{enumerate}
    Moreover, we can write explicit formulas for the killed element:
    \begin{itemize}
        \item (2a) We can take: $F\coloneqq \int \frac{dx}{f}-\int \frac{dy}{g}$.
        \item (2b) We can take: $G \coloneqq \op{exp}\left(N\eta \left(\int \frac{dx}{f}-\int \frac{dy}{g}\right)\right)$, where $N\ne 0$ is any integer such that $N\eta c_{i,j}\in \mathbb{Z}$.
    \end{itemize}
\end{thm}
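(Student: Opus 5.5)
We prove the cycle (2)$\Rightarrow$(3)$\Rightarrow$(1)$\Rightarrow$(2); the first two implications are straightforward and the third is the real content.

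\textbf{(2)$\Rightarrow$(3).} Here we verify the displayed formulas directly. In case (2a) we have $\frac1f=P_1+H_1$, where $P_1$ is a polynomial and $H_1$ collects the higher-order poles $\frac{c}{(X-\alpha)^k}$ with $k\ge 2$. Both have rational antiderivatives, so $\Phi_1=\int\frac{dx}{f}\in\overline{\mathbb{Q}}(x)$, and likewise $\Phi_2=\int\frac{dy}{g}\in\overline{\mathbb{Q}}(y)$. Then
\[
D(\Phi_1-\Phi_2)=f\cdot\tfrac1f-g\cdot\tfrac1g=0,
\]
and $F=\Phi_1-\Phi_2$ is non-constant because $\Phi_1$ is a non-constant function of $x$ alone. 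In case (2b) we have $\frac1f=\sum_j\frac{c_{1,j}}{X-\alpha_{1,j}}$. Then
\[
G=\prod_j (x-\alpha_{1,j})^{N\eta c_{1,j}}\prod_j (y-\alpha_{2,j})^{-N\eta c_{2,j}}
\]
is a genuine rational function. One checks that $D(G)/G=N\eta\bigl(f\cdot\tfrac1f-g\cdot\tfrac1g\bigr)=0$, and $G$ is non-constant since some $c_{i,j}\neq0$.

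\textbf{(3)$\Rightarrow$(1).} This is the standard easy direction of Grothendieck--Katz. Spread $D$ and $F$ out over a finitely generated $\mathbb{Z}$-algebra $R\subset\overline{\mathbb{Q}}$. For almost all primes the reduction $\bar F$ is still not a $p$-th power times a constant; that is, $d\bar F\neq0$. In characteristic $p$ the kernel of $\bar D$ then contains $\overline{\mathbb{F}}_p(x^p,y^p,\bar F)$, which has codimension $1$. Hence $\bar D^p$ is $\overline{\mathbb{F}}_p(x,y)$-proportional to $\bar D$, which is exactly $p$-closedness. We may appeal to the general fact recorded earlier in the paper that a rank-one foliation whose field of constants has transcendence degree $n-1$ is $p$-closed.

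\textbf{(1)$\Rightarrow$(2).} This is the heart of the argument and where we expect the main obstacle. First compute $D^p$ for a separated derivation: since the summands commute, $D^p=\sum_i (f_i\partial_i)^p$, and $(f_i\partial_i)^p=\psi_i\,\partial_i$ for some $\psi_i\in\mathbb{F}_p(x_i)$. The classical formula gives $\psi_i=f_i\cdot(f_i\partial_i)^{p-1}(f_i)$ up to a term. So $p$-closedness, $D^p=hD$, forces $h=\psi_1/f_1=\psi_2/f_2$. The left side depends only on $x$ and the right only on $y$, so both equal a constant $\lambda_p\in\overline{\mathbb{F}}_p$.

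Next, identify $\psi/f$ arithmetically. Using the coordinate $u=\int dx/f$ (locally, $f\partial_x=\partial_u$), the condition $(f\partial_x)^p=\lambda f\partial_x$ is read via residues of $\frac1f$. We expect $\lambda_p$ to be determined by the $p$-curvature of the rank-one connection $d+\frac{dx}{f}$-type, giving $\lambda_p=-\sum_j(c_{1,j}^p-c_{1,j})\cdot(\ldots)$. To pin this down we would first treat the model cases $f=x$ (giving $\lambda=1$), $f=1$ (giving $\lambda=0$) and $f=(x-\alpha)^k$, and then show additivity in the partial fraction decomposition. The concrete expectation is: $\lambda_p(f)=0$ if and only if all $c_{1,j}\in\mathbb{F}_p$; if $L_1\neq0$ and $P_1+H_1\neq0$, then $(f\partial)^p$ is not a constant multiple of $f\partial$ for almost all $p$.

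Given this, split into cases. If $L_1=L_2=0$, we are in (2a). If some $L_i\neq0$, we must show $P_i=H_i=0$ and that the residues are simultaneously rational up to a common scalar $\eta$. The condition $\lambda_p(f)=\lambda_p(g)$ reduces to $c^p=c$-type congruences for ratios $c_{i,j}/c_{1,1}$ modulo almost all primes. By Chebotarev (an element of a number field fixed by Frobenius at almost all primes lies in $\mathbb{Q}$), this gives $c_{i,j}/c_{1,1}\in\mathbb{Q}$, so $\eta=1/c_{1,1}$ works. The hardest step is the exact computation of $(f\partial_x)^p$ in terms of the partial fraction decomposition of $1/f$, in particular ruling out mixed polynomial or higher-pole parts coexisting with logarithmic poles. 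Here we would use the local expansion at a simple pole of $1/f$ (a zero of $f$), where $f\partial_x\sim c^{-1}(x-\alpha)\partial_x+\ldots$ acts diagonally on monomials with eigenvalue determined by $c^{-1}$.
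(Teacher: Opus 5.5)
Your (2)$\Rightarrow$(3) and (3)$\Rightarrow$(1) are fine and agree with the paper. So does your reduction of $p$-closedness to $\lambda_p(f)=\lambda_p(g)\in\overline{\mathbb{F}}_p$. Incidentally, Hochschild's formula gives $\psi_i=(f_i\partial_i)^{p-1}(f_i)$ exactly, with no extra factor and no extra term.

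\textbf{The gap is in (1)$\Rightarrow$(2).} You never compute $\lambda_p(f)$, and the statements you ``expect'' in its place are wrong.

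Your claim that $\lambda_p(f)=0$ iff all $c_{1,j}\in\mathbb{F}_p$ is contradicted by your own model case $f=x$: the residue is $1\in\mathbb{F}_p$, yet $\lambda=1$. The guessed shape $-\sum_j(c_{1,j}^p-c_{1,j})(\ldots)$ comes from the $p$-curvature of a rank-one connection $d+\omega$, which is a different object from the $p$-th power of the vector field $f\partial_x$. The correct dichotomy is: $\lambda_p=0$ iff $L_1\equiv 0$, and a nonzero constant $\lambda_p$ occurs iff $P_1=H_1=0$ and $\lambda_p c_{1,j}^p=c_{1,j}$ for all $j$. Also, $\lambda_p$ itself is not additive over the PFD; only $\lambda_p/f^p$ is.

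Your proposed local tool does not close the gap either. Linearizing $f\partial_x$ at a simple zero of $f$ only gives $\lambda_p c^{p-1}=1$ for the residue $c$ at that point. It says nothing about $P_1$, $H_1$, or residues at multiple zeros of $f$. For example, take $1/f=\frac1x+\frac1{x^2}-\frac1{x-1}+\frac1{(x-1)^2}$, so $f=x^2(x-1)^2/(x^2-x+1)$. Then $f\partial_x$ has no simple zero anywhere on $\mathbb{P}^1$, although $L_1\neq0$. Linearization therefore cannot even exclude $(f\partial_x)^p=0$ here.

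\textbf{The missing idea is the paper's linearization of the $p$-power.} Hochschild's formula gives $(f\partial_x)^p=\mu\, f\partial_x$, where $\mu=\widetilde{[x^{p-1}]}f^{p-1}$ is the $x^{p-1}$-coefficient with respect to the $p$-basis $1,x,\ldots,x^{p-1}$ of $K$ over $K^p$. Since $f^p\in K^p$, this equals $\mu=f^p\,\widetilde{[x^{p-1}]}(1/f)$. The operator $\widetilde{[x^{p-1}]}$ is $K^p$-linear, so it can be evaluated term by term on $P+L+H$ using Laurent expansions:
\begin{enumerate}
\item $P$ contributes $0$ once $p-1>\deg P$.
\item Each $(x-\alpha)^{-m}$ with $m\ge 2$ contributes $0$ once $p>m-1$, because $p\mid\binom{m+pr-2}{pr-1}$.
\item Each $c/(x-\alpha)$ contributes $c/(x^p-\alpha^p)$.
\end{enumerate}
Hence $\mu=\overline{L}/(P+L+H)^p$. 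If $\mu$ is constant, then $\mu P^p+(\mu L^p-\overline{L})+\mu H^p=0$, and uniqueness of the PFD forces exactly the dichotomy above.

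Only after this step does your Chebotarev/Kronecker argument apply: to the ratios $c_{1,j}/c_{1,1}$, and, through $\lambda_p(f)=\lambda_p(g)$, to $c_{1,1}/c_{2,1}$. The same formula also rules out the mixed case $L_1=0\neq L_2$, since then $\lambda_p(f)=0\neq\lambda_p(g)$ for almost all $p$. Your case split leaves that case implicit.
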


The equivalence between (2) and (3) looks like a result one might have guessed to be from the 18th or 19th century; however, it was only proved in 2025 using the language of differential fields \cite{rational_first_for_separated_2025}, and only for $n=2$. I believe that their methods can prove the equivalence of (2) and (3) for any $n\ge 2$ simply by considering many equations at once, not only one. For us, this possibility is a corollary of our work, not the way we do it. 
Our proof goes by establishing that (1) and (2) are equivalent, (3) implies (1), and (2) implies (3). We prove it arithmetically by studying $D$ modulo almost all primes. We do it for all $n\ge 1$. (For $n=1$, it requires a different formulation: Theorem \ref{thm: main: n=1}.)

For $n=2$, condition (3) is equivalent to $D$ defining an algebraically integrable foliation, but for $n>2$, we need to put ``is algebraically integrable'' instead of ``there is a first integral $F$'' in (3) in our theorem. We give a condition for the existence of $F$ in Corollary \ref{cor: the actual answer}, where we introduce a notion of m-saturation. Our answer suggests that the existence of $F$ for a general $D$ is a wilder problem to study than full algebraic integrability.

This paper seems most connected in methods and topics to the papers \cite{Honda_p_conjecture}, \cite{effective_p_conjecture_2026} (a revisit of the former), and the preprint  \cite{rational_first_for_separated_2025}. These three papers use residues of rational functions as a key component of their algebraic integrability studies. This seems to be a key part of any discussion of integrating rational functions, known since Leibniz and Bernoulli, i.e., the partial fraction decomposition; we recall it in \ref{thm: PFD}. No surprises here.

We hope our results will contribute to understanding how to work with $p$-powers of derivations; our approach is based on our theorems about how this works on curves (Proposition \ref{prop: computing p-powers on curves}). The general computation is straightforward but hard to follow, and it takes longer as $p$ increases. 

Hopefully, there exists a way to ``decompose'' derivations into pieces and track what $p$-powers do and how this relates to algebraic integrability, like heights relate to algebraic relations between numbers. This could lead to a gadget giving bounds for degrees of solutions in terms of the formulas defining the derivation, which would solve Conjecture \ref{conj: alg int conj}. (We can extract such bounds from our main theorem.) Another idea around this conjecture is to produce a functor of solutions for $\infty$-foliations as for $D$-modules (I believe these should be somehow between algebraic and formal, because they are ``limits'' of things up to $p^n$-powers; this requires better stacks modulo $p$), lift them, and hope that their sets of lifts intersect nontrivially. Or to take the conjecture as a question: how to show that these two ind-constructible sets are equal? And do some Baire-category-like argument with some like-Borel sets. These are some loose thoughts. The conjecture is difficult, and it requires new tools. For these, some inspiration is required. That's all.

\subsubsection{More Related Literature}

The main perspectives on Problem \ref{problem1} in current mathematics fall under three broad labels, and many branches of these. Our citations are incomplete; these have been active problems for hundreds of years, starting around the time Descartes connected algebra with geometry, and they admit multiple layers of iteration. Nevertheless, the following should be enough to connect with current understanding and identify the rest for any related purposes.
\begin{itemize}
    \item integration in finite terms \cite{rosenlicht_finite_terms_1972}, which is close to differential Galois theory \cite{diff_galois_2003},
    \item Poincaré-Painlevé problem on foliations \cite{neto_poincare_2002}, \cite{pereira_poincare_2002}, 
    \item and finite monodromy problem with a Grothendieck--Katz $p$-curvature conjecture as a proposed solution \cite{GK_katz}, \cite{Honda_p_conjecture}, \cite{GK_bost}, \cite{GK_andre},   \cite{GK-survey}, \cite{lam2026pcurvaturenonabeliancohomology}, \cite{effective_p_conjecture_2026}.
\end{itemize}
  All these roads lead to working on algebraic integrability of foliations and Conjecture \ref{conj: alg int conj}. The preprint \cite{TShBE-conjectureF} that introduced the general conjecture has been retracted from public access, but it is still circulating and motivating new research.

\subsubsection{Acknowledgment}
This work was done by the author while being hired as a Simons postdoc in mathematics (2025-2028) at Kyiv School of Economics, Ukraine. I want to thank Michael F. Singer and Henryk Żołądek for helping me determine whether the main result for $n=2$ equivalence (2) and (3) was known. Eventually, the paper \cite{rational_first_for_separated_2025} was identified. Therefore, I assume this is not a classical result by Liouville, or... Euler or something. I also want to thank Asem Abdelraouf for our ramen discussion, which led to Lemma \ref{lem: ramen lemma}.

\subsubsection{AI disclosure} No part of this paper was written by any AI. All mistakes are mine. (Though I used Grammarly for English grammar. It uses AI, but it is not invasive generative AI.) Nevertheless, free AI models, Google and ChatGPT, were used as ``googling/coding expansions'': for computing explicit examples such as explicit formulas for $\op{tan}(\sqrt{a}F)$ for various $a,b$ in Example \ref{ex: x^n+a}, to quickly recall well known results and formulas, and trying to learn what is known about Problem \ref{problem1} in the literature. It was very good at bluffing solutions in a quasi-language of integration in finite terms; thus, I initially expected it was a classical result buried somewhere. My current understanding is that it is not, and this kind of reasoning is simply expected from anyone when asked.

\section{Context: Generalized Grothendieck--Katz \texorpdfstring{$p$}{p}-Curvature Conjecture}
We explain what an integral model is, what the Kronecker--Chebotarev theorem says about roots of a polynomial from its modulo $p$ data, and what the main conjecture motivating this paper is. 

\subsection{Integral Models and Good Reductions}

We work with a derivation $D$ on a field $K$ over a subfield $k$. This setup does not allow us to go modulo any prime, because $K$ is a field. To be able to go modulo any $p$, we can take a subring $A$ of $K$ over which $D$ is defined and then do this operation for an ideal $p$ of $A$. This is possible because $D$ requires finitely many symbols to be defined. This ring $A$ is called an integral model of $K$. It can be chosen to be flat. It can be chosen to preserve properties such as ``these elements are a separable transcendental basis''. Such a model is a gadget to move $D$ around and collect information about it from more exotic places, where it interacts with exotic structures; for us, its $p$-powers, which modulo $p$ are again derivations. And we have enough info from enough places, then we know all about $D$, like in the Yoneda lemma.

We do not do anything smart or novel with these integral models, so we only give references where one can read about it: \cite[Chapter 10]{Liu_AG_book}, \cite[Chapter 0C2P]{stacks-project}. We will only outline their usage in our arguments. Most of the actual usage is in the following section only anyway.

\subsection{Kronecker Theorem Plus Plus}

Here is a useful tool. It is rather folklore. It arises as a combination of the Kronecker theorem \cite[Theorem 1.1.]{effective_p_conjecture_2026}, \cite[Section 24.5]{field-arithmetic-2023}, the Chebotarev density theorem \cite[Chapter 7]{field-arithmetic-2023}, and the Hilbert irreducibility theorem \cite[Chapter 13 and 14]{field-arithmetic-2023}. Kronecker theorem is often presented as a corollary of the Chebotarev theorem.

Here we have a criterion for an abstract element from any field of characteristic zero to be a rational number.

\begin{thm}[Kronecker theorem ++]\label{thm: kronecker ++}
    Let $P\in \mathbb{Q}(t_1,\ldots,t_r)[X]$ be an irreducible polynomial. 
    If modulo all maximal primes $q$ from an open dense subset of an integral model of $\mathbb{Q}(t_1,\ldots,t_r)$ it has a root in $\mathbb{F}_p$, where $p>0$ is the characteristic of $\kappa(q)$, then $P$ is linear, and its root is a rational number.

    Consequently, if $K$ is a field of characteristic zero, $f\in K$, and $f$ modulo $q$ for all maximal ideal $q$, in an open dense subset of a suitable integral finite-type model of a subfield $f\in K'\subset K$, satisfies $X^p=X$, where $p>0$ is the characteristic of $\kappa(q)$, then $f\in \mathbb{Q}$.
\end{thm}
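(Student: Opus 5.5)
Both parts reduce to two classical inputs: the Hilbert irreducibility theorem, to specialize the parameters $t_1,\ldots,t_r$, and the Chebotarev density theorem over $\mathbb{Q}$, to count primes with a root.

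\textbf{First part.} Clear denominators so that $P\in \mathbb{Z}[t_1,\ldots,t_r][X]$ is irreducible in $\mathbb{Q}(t)[X]$. Suppose $\deg P=d\ge 2$.

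By Hilbert irreducibility, there is a specialization $t\mapsto a\in\mathbb{Q}^r$, in fact a Zariski dense set of such $a$, with the following properties:
\begin{itemize}
\item $P_a\in\mathbb{Q}[X]$ is irreducible of degree $d$;
\item the Galois group of $P_a$ equals the Galois group $G$ of $P$ over $\mathbb{Q}(t)$.
\end{itemize}
We may also choose $a$ so that, for almost all $p$, the reduction $a \bmod p$ lies in the prescribed open dense subset $U$ of the integral model. To arrange this, shrink $U$ to a basic open $\{h\neq 0\}$ of $\mathbb{Z}[1/N][t]$ and pick $a$ with $h(a)\neq 0$. Then $h(a)$ is a nonzero rational number, so only finitely many primes divide it.

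For such $p$, the point $(a \bmod p)$ is a maximal ideal $q\in U$ with $\kappa(q)=\mathbb{F}_p$. The reduction of $P$ at $q$ is $P_a \bmod p$, so the hypothesis says $P_a$ has a root mod $p$ for almost all $p$.

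By Chebotarev, the density of primes at which $P_a$ has a root in $\mathbb{F}_p$ equals the proportion of elements of $G$ fixing at least one root. Jordan's lemma says that for a transitive group acting on $d\ge2$ points, some element is fixed-point-free, so this proportion is $<1$. This contradicts the hypothesis, so $d=1$. A linear irreducible polynomial over $\mathbb{Q}(t)$ has root $\beta\in\mathbb{Q}(t)$.

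It remains to show $\beta\in\mathbb{Q}$. Suppose $\beta$ is nonconstant. At points $q$ of residue field $\mathbb{F}_p$ the value $\beta(q)$ always lies in $\mathbb{F}_p$, so the hypothesis must be used at all maximal ideals of $U$. These include points with residue field $\mathbb{F}_{p^k}$ for $k>1$, where we still require a root in $\mathbb{F}_p$. Over $\overline{\mathbb{F}_p}$, a nonconstant $\beta$ takes values outside $\mathbb{F}_p$ at closed points of $U$: its fibres over the finitely many elements of $\mathbb{F}_p$ form a proper closed subset. This gives the contradiction.

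\textbf{Second part.} Let $f\in K$ with $f^p\equiv f$ modulo $q$ for all $q$ in a dense open of a finite-type model of $K'$. Take $K'$ finitely generated over $\mathbb{Q}$, and choose a transcendence basis $t_1,\ldots,t_r$ of $K'$. Then $f$ is algebraic over $\mathbb{Q}(t)$, since otherwise we could take $f$ itself as one of the $t_i$ and argue as below. Let $P$ be its minimal polynomial.

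On a suitable open of the model, $f \bmod q$ satisfies $X^p=X$, so $f \bmod q\in\mathbb{F}_p$. Hence the reduction of $P$ at the image point of $\mathbb{Q}(t)$'s model has a root in $\mathbb{F}_p$. This holds on a dense open set, because the finite map of models is dominant and its image contains a dense open. The first part then gives $f\in\mathbb{Q}$.

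The transcendental case is the same argument with $P=X-f$: $f$ is nonconstant and would take values outside $\mathbb{F}_p$ at closed points with larger residue fields.

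\textbf{Main obstacle.} The delicate step is the compatibility of the Hilbert-irreducibility specialization with the open set $U$. Primes must reduce into $U$ while the Galois group is preserved. I would handle this by shrinking $U$ to a principal open $\{h\neq0\}$ first and choosing $a$ in the Hilbert set with $h(a)\neq0$, which is possible because Hilbert sets are Zariski dense.
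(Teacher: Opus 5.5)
Your proposal is correct and follows the paper's route: Hilbert irreducibility specializes $P$ to an irreducible $P_a\in\mathbb{Q}[X]$ of the same degree, and Kronecker's theorem (which you unpack via Chebotarev and Jordan's lemma) then forces $\deg P=1$. Two small points: transitivity of the Galois group of $P_a$ already suffices for that step, so preserving $G$ is more than you need; and evaluation at closed points with residue field $\mathbb{F}_{p^k}$ forces the root into $\mathbb{Q}$, with the second part handled, as in the paper, through the minimal polynomial of $f$ over a transcendence basis.

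Beyond the paper, you make explicit that the specialization lands in $U$ for almost all $p$ and that the hypothesis transfers from the model of $K'$ to that of $\mathbb{Q}(t)$, steps the paper leaves implicit. Your separate ``transcendental case'' is unnecessary, since $f\in K'$ is automatically algebraic over a transcendence basis of $K'$.
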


\begin{proof}
    Let an integral finite-type model be chosen. This is an affine scheme.
    
    By the Hilbert irreducibility theorem, there exists an integral point $a=(a_1,\ldots,a_r)\in \mathbb{Z}^r$ of the model such that $P(a)\in \mathbb{Q}[X]$ has the same degree as $P$ and is irreducible.

    The polynomial $P(a)\in \mathbb{Q}[X]$ satisfies the assumptions of the Kronecker theorem \cite[Theorem 1.1.]{effective_p_conjecture_2026}, thus $P(a)$, and so $P$ too, is of degree $1$. If the root of $P$ in $\mathbb{Q}(t_1,\ldots,t_r)$ is not rational, then we can evaluate $t_i\in \overline{\mathbb{F}_p}$ for some $p>0$ in a way that the reduction is not in $\mathbb{F}_p$. A contradiction. Thus $P$ is linear, and its root is a rational number.

    The final part follows from the first part applied to $P$ that is the minimal polynomial of $f$ over some transcendental elements $t_1,\ldots,t_r\in K$ such that $f$ is algebraic over $\mathbb{Q}(t_1,\ldots,t_r)$.
\end{proof}

\subsection{Algebraic Integrability Conjecture}

The following conjecture is a proposed answer to a finite monodromy problem and Poincaré problem due to Taylor, Shephard-Barron, and Ekedahl. They did it in their unpublished preprint \cite[Conjecture F]{TShBE-conjectureF}. It is a generalization of the Grothendieck--Katz $p$-curvature conjecture. So, one could call it by amalgamating all these names, and some more, but this would be ugly. Therefore, we restate the conjecture, give it a neutral name, and prove some lemmas.

\begin{conj}[Algebraic Integrability Conjecture]\label{conj: alg int conj}
    Let $K/\overline{\mathbb{Q}}(x_1,\ldots,x_n)$ be an algebraic extension of fields. Let $\F$ be a foliation on $K$, which admits a model over the ring of integers $\cO_L$, where $L$ is a finite extension of $\mathbb{Q}$.

    The following are equivalent:
    \begin{enumerate}
        \item The foliation $\F$ is algebraically integrable, i.e., it is of the form $\F=\op{Der}_{W}(K)$ for a subfield $\overline{\mathbb{Q}}\subset W\subset K$.
        \item For almost all primes $p$ and all maximal ideals $q$ over $p$ in $\cO_L$ the reduction modulo $q$ of $\F$ is $p$-closed, i.e., it defines a $1$-foliation.
    \end{enumerate}
\end{conj}

For the terminology of $1$-foliations and a general theory of purely inseparable subfields from the point of view of foliations, I recommend my PhD thesis \cite{Grabowski_PhD_thesis} or its arXiv version \cite{grabowski2025powertowerspurelyinseparable}.

\begin{prop}[The easy direction]\label{prop: easy direction AlgIntConj}
    For a foliation on an $n$-dimensional variety $V$ over a field of characteristic zero, being algebraically integrable implies being $p$-closed for almost all maximal ideals $q$, where $p>0$ is the characteristic of the residue field of $q$, for any suitable integral model over which it is performed.
\end{prop}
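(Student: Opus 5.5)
The plan is to use the standard fact that $p$-closedness is inherited by the derivations killing a subfield that contains $p$-th powers. Suppose $\F=\op{Der}_W(K)$ with $\overline{\mathbb{Q}}\subset W\subset K$, or more generally over a characteristic-zero base field $k$. Choose a transcendence basis $w_1,\ldots,w_m$ of $W$ over $k$, and extend it to a separating transcendence basis $w_1,\ldots,w_m,t_{m+1},\ldots,t_n$ of $K$. Since the characteristic is zero, $W$ is generated over $k(w_1,\ldots,w_m)$ by finitely many algebraic elements, and $K$ is generated over $W(t_{m+1},\ldots,t_n)$ by finitely many algebraic elements. Spread all of this out over a finitely generated $\mathbb{Z}$-algebra $A$. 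This means choosing an integral model containing the $w_i$, the $t_j$, the primitive elements with their minimal polynomials, the inverses of their discriminants, and generators $\partial_{m+1},\ldots,\partial_n$ of $\F$. Take these generators dual to $dt_{m+1},\ldots,dt_n$, so that $\partial_j(w_i)=0$ and $\partial_j(t_l)=\delta_{jl}$. After shrinking to an open dense subset, any finitely many identities among these data that hold over $K$ also hold in the model.

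Next I reduce modulo a maximal ideal $q$ with residue characteristic $p$ in this open set. The reductions $\overline{w}_i,\overline{t}_j$ still form a separating transcendence basis of the reduced function field $K_q$, because the discriminants are units. The reduced field $W_q$ is generated by the $\overline{w}_i$ and separable algebraic elements over them. The reduced foliation $\F_q$ is spanned over $K_q$ by $\overline{\partial}_{m+1},\ldots,\overline{\partial}_n$. These commute and kill $W_q$, since commutativity and killing the generators of $W$ are finitely many identities. Note that a derivation killing $\overline{w}_i$ also kills anything separable algebraic over $k_q(\overline{w})$.

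Now I identify $\F_q$ with $\op{Der}_{W_q K_q^p}(K_q)$. Since $\overline{t}_{m+1},\ldots,\overline{t}_n$ is a $p$-basis of $K_q$ over $W_qK_q^p$ (separability), this module of derivations has dimension $n-m$. It is spanned by the derivations dual to the $d\overline{t}_j$, which are exactly the $\overline{\partial}_j$. Any module of the form $\op{Der}_{E}(K_q)$ with $K_q^p\subset E$ is closed under Lie brackets and $p$-th powers, because $D^p$ of a derivation is again a derivation and it kills $E$. Hence $\F_q$ is $p$-closed. Equivalently, one can check directly that $\overline{\partial}_j^p=0$, since it vanishes on the $p$-basis and on $W_qK_q^p$. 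Then $p$-closedness of the span of commuting derivations with $\partial^p=0$ follows from $(fD)^p=f^pD^p+(fD)^{p-1}(f)D$ (Hochschild's formula), together with the commutation relations.

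The main obstacle is the independence from the chosen model: the statement says \emph{any} suitable integral model. I will handle this by noting that any two finite-type models of the same data become isomorphic over a common open dense subset, because they agree after localizing at finitely many elements. So the conclusion, which only needs an open dense set of $q$, transfers. The other delicate point is ensuring separability survives reduction, meaning that the $\overline{t}_j$ remain a $p$-basis. That is exactly why the discriminants are inverted and the dual derivations are included in the model.
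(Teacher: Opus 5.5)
Your proposal is correct and follows essentially the same route as the paper: reduce a transcendence basis of $W$ so that it stays separable modulo $q$, then identify the reduced foliation with the full module of derivations killing those reductions, which is automatically $p$-closed. The paper makes that identification by an inclusion-plus-equal-rank count. You make it via the basis dual to the $d\overline{t}_j$ and the $p$-basis of $K_q$ over $W_qK_q^p$, which is the same argument with more of the details written out.
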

\begin{proof}
    Let $K$ be the generic point of $V$.
    
    If $\F$ is of rank $r$ and $\F=\op{Der}_{W}(K)$, then the transcendence degree of $W$ is $n-r$. Let $F_1,F_2,\ldots,F_{n-r}$ be a transcendental basis of $W$. 

    We can choose a model such that for almost all $q$ the reduction of $\F$ is of rank $r$ and $\bigwedge dF_i \ne0$ modulo $q$, i.e., the reductions $F_i$ are still a separable transcendental basis of a subfield.
    
    Finally, a derivation $D$ belongs to $\F$ modulo $q$ (over the good ideals $q$) if and only if it kills the reductions of $F_1,\ldots,F_{n-r}$. Indeed, $\F$ modulo $q$ satisfies this, and it has rank $r$, and since the derivations killing the reductions of $F_i$ form a $1$-foliation $\mathcal{G}$ of rank $r$, and $\F\subset \mathcal{G}$, thus $\F=\mathcal{G}$. Therefore, it is $p$-closed modulo all these $q$.
\end{proof}

\begin{lemma}[Algebraic integrability only depends on the field of definition]\label{lem: eventually alg is alg: rank 1 for dim 2}
    Let $K'/K$ be a finite extension of fields of characteristic zero. 
    \begin{enumerate}
        \item Any foliation $\F$ on $K$ over $k$ uniquely extends to a foliation on $K'$ over $k$, where $k$ is a subfield.
        \item If $\F$ is of rank $1$, and $D$ is a basis of $F$, then if there exists $F\in K'$ non-constant such that $D(F)=0$, then there exists $G\in K$ non-constant such that $D(G)=0$.
        \item If $\F$ is algebraically integrable over $K'$, then it is algebraically integrable over $K$.
    \end{enumerate}
\end{lemma}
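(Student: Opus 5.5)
Throughout, a foliation on $K$ over $k$ means a $K$-subspace $\F\subset \op{Der}_k(K)$ closed under the Lie bracket. All three parts rest on one fact: in characteristic zero, $K'/K$ is finite separable, so every $k$-derivation of $K$ extends uniquely to a $k$-derivation of $K'$. Indeed, if $K'=K(\theta)$ with minimal polynomial $P$, then $D(\theta)$ is forced by $P^D(\theta)+P'(\theta)D(\theta)=0$ with $P'(\theta)\neq 0$.

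\emph{Part (1).} I extend $\F$ by setting $\F'\coloneqq K'\cdot\{\tilde D : D\in\F\}$, where $\tilde D$ is the unique extension of $D$. By uniqueness, $\widetilde{[D_1,D_2]}=[\tilde D_1,\tilde D_2]$, since both restrict to $[D_1,D_2]$ on $K$. The Leibniz rule then shows that $\F'$ is closed under brackets. Restriction $\op{Der}_k(K')\to\op{Der}_k(K)$ is bijective, and it is $K$-linear. Hence a $K$-basis of $\F$ extends to a $K'$-linearly independent family, and the rank is preserved. Any foliation on $K'$ whose restriction to $K$ is $\F$ must contain the $\tilde D$ and have the same rank, so it equals $\F'$. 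This gives uniqueness.

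\emph{Part (2).} Let $F\in K'$ be non-constant with $D(F)=0$, and let $T^d+a_{d-1}T^{d-1}+\dots+a_0$ be its minimal polynomial over $K$. Applying $D$ to this relation gives $\sum_i D(a_i)F^i=0$, a polynomial relation of degree less than $d$. So every $D(a_i)=0$. If all $a_i$ were constants, that is, algebraic over $k$ inside $K$, then $F$ would be algebraic over $k$. Here I must interpret ``constant'' as ``in the relative algebraic closure of $k$''; that is presumably the meaning, consistent with $k^s$ in Problem \ref{problem1}. If instead constants means $\ker D$ restricted to $k$, the same argument works once one notes that the $D$-constants of $K'$ algebraic over $k$ are harmless. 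So some $a_i$ is non-constant, and I take $G=a_i$. A cleaner alternative is to use the trace or norm: $G=\op{Tr}_{K''/K}(F^m)$ over a Galois closure $K''$, for suitable $m$. Each conjugate $\sigma F$ is killed by $\tilde D$, because $\sigma^{-1}\tilde D\sigma$ is also an extension of $D$ and uniqueness forces it to equal $\tilde D$. The coefficients approach is simpler.

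\emph{Part (3).} Suppose $\F'=\op{Der}_{W'}(K')$ with $k\subset W'\subset K'$. I set $W\coloneqq W'\cap K$, which is killed by $\F$. The main obstacle is the transcendence-degree count: showing $\op{trdeg}_k W=\op{trdeg}_k W'=n-r$. Once that holds, $\op{Der}_W(K)$ has rank $r$ and contains $\F$, so the two are equal.

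To get the count, I pass to a Galois closure $K''\supset K'$ and extend $\F'$ to $K''$, which is legitimate by part (1). The field $W'' \coloneqq (K'')^{\F''}$ of $\F''$-constants is algebraic over $W'$, so it has the same transcendence degree. The field $W''$ is stable under $\op{Gal}(K''/K)$, by the uniqueness argument from part (2). Hence $W''$ is algebraic over its fixed field $W''\cap K$; this is the standard fact that a field acted on by a finite group is algebraic over the invariants, via the symmetric functions of the orbit of each element. Therefore $W''\cap K\subset W$ has transcendence degree $n-r$, and so does $W$. Part (2) is essentially the special case $r=1$ of this argument, handled by hand.
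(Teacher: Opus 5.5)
Your parts (1) and (2) follow the paper. Part (1) rests on the unique extension of derivations along a finite separable extension. Part (2) uses the degree argument showing that $D$ kills the coefficients of the minimal polynomial of $F$ over $K$, with ``constant'' read as ``algebraic over $k$'', exactly as the paper does.

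For (3) you take a genuinely different route.

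\emph{The paper's route.} It stays in $K'$. It takes a transcendence basis $F_1,\ldots,F_{n-r}$ of the first integrals of $\F'$. Applying the degree argument of (2) to every $D\in\F$, the coefficients of the minimal polynomials of the $F_j$ over $K$ are killed by $\F$. The $F_j$ are algebraic over the field these coefficients generate, so this subfield of $K$ already has transcendence degree $n-r$.

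\emph{Your route.} You pass to a Galois closure $K''$ and derive $\sigma^{-1}\tilde D\sigma=\tilde D$ from uniqueness of extensions. You conclude that $\ker\F''$ is $\op{Gal}(K''/K)$-stable, and then use that a field is algebraic over the invariants of a finite group.

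The underlying mechanism is the same: the elementary symmetric functions of the distinct conjugates of $F_j$ are the coefficients of its minimal polynomial. The paper's version is shorter and needs neither the Galois closure nor the equivariance step. Yours gives the structural identity $\ker\F=(\ker\F'')^{\op{Gal}(K''/K)}$ for the annihilator in $K$, and treats (2) and (3) uniformly.

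One small correction is needed. Take $W\coloneqq W''\cap K$, the annihilator of $\F$ in $K$. Alternatively, first replace $W'$ by its algebraic closure in $K'$, which does not change $\op{Der}_{W'}(K')$.

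For an arbitrary $W'$ with $\F'=\op{Der}_{W'}(K')$, your inclusion $W''\cap K\subset W'\cap K$ can fail. Take $K=k(x,y)$ and $K'=k(t,y)$ with $t^2=x$, and let $\F=K\frac{\partial}{\partial y}$ and $W'=k(t+t^2)$. Then $\op{Der}_{W'}(K')=K'\frac{\partial}{\partial y}=\F'$. However $W'\cap K=k$, because the intersection is fixed by $t\mapsto -t$ and $t\mapsto -1-t$, hence by $t\mapsto t+1$. Meanwhile $W''\cap K=k(x)$. With $W=W'\cap K$ your final step would therefore give $\op{Der}_W(K)=\op{Der}_k(K)\neq\F$.
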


\begin{proof}
    The first item is universal: any derivation extends uniquely along a separable extension by basic properties of K{\"a}hler differentials.

    The second item. Let $P=X^n+a_{n-1}X^{n-1}+\ldots +a_0$ be the minimal polynomial of $F$ over $K$, so $a_i\in K$. We have $P(F)=0$. We apply $D$.
    \[
    0=D(0)=D(P(F))=P'(F)D(F)+(D(1)F^n+D(a_{n-1})F^{n-1}+D(a_{n-2})F^{n-2}+\ldots+D(a_0)).
    \]
    Since $D(F)=0$, $D(1)=0$, and the degree of $D(a_{n-1})F^{n-1}+D(a_{n-2})F^{n-2}+\ldots+D(a_0)$ is $<n$, we get that
    for all $i=0,\ldots,n-1$ we have $D(a_i)=0$.
    If all of $a_i$ are constant, then $F$ would be from $k'\supset k$, a finite extension, i.e., constant. A contradiction: one of the $a_i$ is not constant, so we can set it to be $G$. 

    The third item generalizes the second. It is proved the same way. Take a transcendental basis of the first integrals of $\F$; then the coefficients of their minimal polynomials are killed. These elements also have the same transcendence degree as the first integrals in $K'$, because after a separable extension they are that field. They are in $K$ and killed, so we are done.

    This finishes the proof.
\end{proof}

\section{Computing \texorpdfstring{$p$}{p}-Powers On Curves}

\subsection{General Theory For Curves}

\begin{defin}
    A \textbf{generic point of a curve} is a field $K$ of positive characteristic $p>0$ such that the extension $K/k$, where $k=\bigcap_{n\ge 1} K^{p^n}$, is a finitely generated field extension of transcendental degree $1$.
    
    Concretely, it is equivalent to $K$ being isomorphic to the following construction: take a perfect field $k$, take rational functions in one variable over $k$, i.e., $k(x)$, take a finite separable extension $K/k(x)$. Any choice of such $x$ is called a coordinate for $K$.
\end{defin}

\begin{prop}[Derivations and $p$-Powers]\label{prop: identities for p-lie} Let $K$ be a field of characteristic $p>0$. Let $x\in K$. Let $D,E$ be derivations on $K$.

    \begin{enumerate}
        \item The $p$-times composition 
        $D^{\circ p}=D^p$
        is a derivation on $K$.
        \item Derivations kill $p$-powers: 
        $$D(x^p)=0.$$
        In particular, all derivations are derivations over $k$.
        \item (Jacobson's formula) There exists a universal Lie polynomial $s$ such that 
        $$\left(D+E\right)^p=D^p+E^p+s(D,E).$$
        \item (Hochschild's formula) We have 
        $$\left(xD\right)^p=x^pD^p+\left(xD\right)^{\circ (p-1)}(x)D.$$
        \item We have
        \[
        [D,xE]=D(x)E+x[D,E].
        \]
    \end{enumerate}
\end{prop}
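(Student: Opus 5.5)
The five items are standard identities of restricted Lie algebras, and I would prove them in the order (2), (5), (1), (4), (3): each is short, and the only substantial one is (4).

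For (2), the Leibniz rule gives $D(x^p)=px^{p-1}D(x)=0$. Since $k=\bigcap_n K^{p^n}\subset K^p$, every element of $k$ is a $p$-th power, so $D$ kills $k$; thus $D$ is $k$-linear and hence a derivation over $k$. For (5), expand directly: $[D,xE](a)=D(xE(a))-xE(D(a))=D(x)E(a)+x[D,E](a)$.

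For (1), I would iterate the Leibniz rule to get the general Leibniz formula $D^p(ab)=\sum_{i=0}^p\binom{p}{i}D^i(a)D^{p-i}(b)$, proved by induction on the number of compositions. Since $p\mid\binom{p}{i}$ for $0<i<p$, this collapses to $D^p(ab)=D^p(a)b+aD^p(b)$. Additivity is clear, so $D^p$ is a derivation.

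For (3), work in the associative algebra $\op{End}(K)$ of additive endomorphisms, viewed over $\mathbb{F}_p$; derivations form a Lie subalgebra there under the commutator. I would invoke Jacobson's classical identity in any associative $\mathbb{F}_p$-algebra: $(a+b)^p=a^p+b^p+\sum_{i=1}^{p-1}s_i(a,b)$, where $i\,s_i$ is the coefficient of $t^{i-1}$ in $\op{ad}(ta+b)^{p-1}(a)$. To prove it, differentiate $(ta+b)^p$ with respect to $t$, which gives $\sum_{j}(ta+b)^j a (ta+b)^{p-1-j}=\op{ad}(ta+b)^{p-1}(a)$. The equality of these two expressions uses the identity $\sum_j X^jaX^{p-1-j}=\op{ad}(X)^{p-1}(a)$ in characteristic $p$, which holds because $\binom{p-1}{j}\equiv(-1)^j$. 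Each $s_i$ is then a Lie polynomial, and since $D$, $E$, $D^p$ and $E^p$ are derivations, so is $s(D,E)$.

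Item (4), Hochschild's formula, is the main obstacle. I would first try to prove it directly: take $a=xD$ and $b=0$ is useless, so instead argue by induction that $(xD)^n=\sum_{j\le n}c_{n,j}D^j$ with $c_{n,j}\in K$, leading coefficient $c_{n,n}=x^n$ and $c_{n,1}=(xD)^{n-1}(x)$. Then at $n=p$ the middle coefficients must vanish. The cleanest route to that vanishing is to note that $(xD)^p$ and $x^pD^p$ are both derivations, by (1) and because $x^p$ is a scalar, and that a differential operator $\sum_j c_jD^j$ of order at most $p$ which is a derivation must have $c_j=0$ for $2\le j\le p-1$. I would make this rigorous by passing to a local coordinate in which $D=\partial_t$; such a coordinate exists for the curves relevant to the paper when $D\neq0$, and the case $D=0$ is trivial. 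Apply the operator to $t^m$ for $m<p$: the derivation property forces $c_j\,m(m-1)\cdots(m-j+1)=0$ for $2\le j\le p-1$ by comparing with $mt^{m-1}\cdot(\text{value on }t)$. For a fully general $K$, I would instead cite Hochschild's original lemma. The coefficient of $D$ comes out of the induction: $c_{n+1,1}=xD(c_{n,1})$, with $c_{1,1}=x$, which gives $c_{p,1}=(xD)^{p-1}(x)$.
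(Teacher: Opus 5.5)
Your treatment of (1), (2) and (5) matches the paper. For (3) you do more than the paper, which only cites Katz: deriving Jacobson's formula by differentiating $(ta+b)^p$ in $t$ and using $\sum_j X^jaX^{p-1-j}=\op{ad}(X)^{p-1}(a)$ (from $\binom{p-1}{j}\equiv(-1)^j$) is correct and makes that item self-contained.

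The gap is in your direct argument for (4). You assert that a nonzero derivation on a generic point of a curve can be written as $D=\partial_t$, i.e.\ $D(t)=1$ for some $t$. This is false, and it fails for exactly the derivations the paper needs. Take $K=k(x)$ and $D=x\frac{\partial}{\partial x}$. The derivations $D^p$ and $D$ agree on $x$, so $D^p=D$. Then $D(t)=1$ would give $1=D^p(t)=D^{p-1}(1)=0$. The same contradiction arises in any field extension to which $D$ extends, so passing to a completion or a ``local'' coordinate does not help. In general, on a curve $D^p=\mu D$, and such a $t$ can exist only if $\mu=0$. But the paper applies (4) in the proof of Proposition \ref{prop: computing p-powers on curves} to an arbitrary $D=f\frac{\partial}{\partial x}$, and $\mu\neq0$ whenever $L\neq0$ (Theorem \ref{Main Formula}). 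So your coordinate argument does not cover the relevant cases, and for (4) you are left with the same citation of Hochschild as the paper.

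The intermediate claim you need is nevertheless true for every field $K$ and every $D\neq0$, and it can be proved without coordinates.

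\emph{Step 1: $1,D,\ldots,D^{p-1}$ are linearly independent over $K$.} Take a relation $\sum_{j=0}^r e_jD^j=0$ with $e_r\neq0$ and $r\le p-1$ minimal; clearly $r\geq1$. Pick $a$ with $D(a)\neq0$, apply the relation to $ab$, and subtract $a$ times the relation applied to $b$. By Leibniz this yields the operator identity $\sum_{k=0}^{r-1}\bigl(\sum_{j>k}\binom{j}{k}e_jD^{j-k}(a)\bigr)D^k=0$. Its top coefficient is $re_rD(a)$, which is nonzero because $0<r<p$. This contradicts the minimality of $r$.

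\emph{Step 2: the middle terms vanish.} The operator $S=(xD)^p-x^pD^p-c_{p,1}D=\sum_{j=2}^{p-1}c_{p,j}D^j$ is a derivation. Let $m$ be the largest index with $c_{p,m}\neq0$. Then $S(ab)-aS(b)-bS(a)=0$ expands to a relation among $D,\ldots,D^{m-1}$ with top coefficient $mc_{p,m}D(a)\neq0$, contradicting Step 1.

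Combined with your recursion $c_{n+1,1}=xD(c_{n,1})$, $c_{1,1}=x$, this proves Hochschild's formula for arbitrary $K$ and removes the need for any citation.
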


\begin{proof}
    The first two are elementary. Let $y\in K$.
    \begin{enumerate}
        \item $D^p(xy)=\sum_{i=0}^p \binom{p}{i}D^i(x)D^{p-i}(y)=xD^p(y)+yD^p(x)$
        \item $D(x^p)=px^{p-1}D(x)=0$.
    \end{enumerate}
    
    The third one is called Jacobson's formula; we quote \cite[(5.2.4.)]{Katz_nilpotent_connections_1970}.

    For the fourth one, we quote \cite[Lemma 1]{Hochschild_1955}. This lemma proves exactly this identity in a greater generality. (Watch out, in \cite{Katz_nilpotent_connections_1970} a similar formula is introduced, but the coefficient next to $D$ there is wrong. You can check it by hand for $p=3$.)

    The fifth one is a standard identity for Lie brackets.
\end{proof}

\begin{prop}[$p$-Powers of Derivations on Curves]\label{prop: computing p-powers on curves}
    Let $K$ be a generic point of a curve with a coordinate $x$. 
    Let $f\in K$. We define a derivation on $K$ by
    \[
    D=f\frac{\partial}{\partial x}
    \]
    Then, its $p$-power is given by 
    \[
    D^{\circ p}=-b_{p-1} D
    \]
    where $b_{p-1}\in K^p$ is the unique coefficient in the expansion in the $p$-basis $\frac{x^{j}}{(j)!}$ for $0\le j<p$
    \[
    f^{p-1}=b_0+b_1\frac{x}{1!}+\ldots +b_{p-1}\frac{x^{p-1}}{(p-1)!},
    \]
    where $b_i\in K^p$.
\end{prop}

\begin{proof}
    Let $g=f^{p-1}$. We use Proposition \ref{prop: identities for p-lie} (4) for $gD$ and simplify with (2) and (5):
    \begin{align*}
        (gD)^p&=g^pD^p+(gD)^{p-1}(g)D\\
        (f^p\frac{\partial}{\partial x})^p&=f^{p(p-1)}D^p+(f^p\frac{\partial}{\partial x})^{p-1}(g)D\\
        (\frac{\partial}{\partial x})^p&=f^{p(p-1)}D^p+f^{p(p-1)}(\frac{\partial}{\partial x})^{p-1}(g)D\\
        0&=f^{p(p-1)}D^p+f^{p(p-1)}b_{p-1}D\\
        0&=f^{p(p-1)}(D^p+b_{p-1}D)
    \end{align*}
    
If $f=0$, then $D=0$, $b_{p-1}=0$, and $D^p=0$, so the formula holds. 

If $f\ne 0$, then $D^p+b_{p-1}D=0$, so we get $D^p=-b_{p-1}D$.
\end{proof}
\begin{remark}[Why the minus sign?]
    By Wilson's theorem, we have $(p-1)!\equiv -1 \ (\op{mod} p)$, thus, to get the coefficient $b_{p-1}$ we can use any basis $a_ix^i$, $i=0,1,\ldots,p-1$, of $K$ over $K^p$, where $a_i\in K^p, a_i\ne 0, a_{p-1}=-1$. 
    
    Consequently, if we write $f^{p-1}=c_0+c_1x+\ldots+c_{p-1}x^{p-1}$, then we have $D^p=c_pD$. This is actually the most common way people write it down. Nevertheless, I find putting factorials where they should be a moral duty. (This is a joke.)
\end{remark}

\subsection{For Rational Curves} We work with $K=k(x)$, where $k$ is a perfect and $\op{char}(k)=p>0$.

\begin{thm}[Partial Fraction Decomposition, {\cite[Theorem 2]{PFD_euclidean_1989}}]\label{thm: PFD}
    Let $k$ be a field of any characteristic.
    Let $f\in k(x)$ be a rational function. Let $f=\frac{a}{b}$ be a presentation of $f$, where $a,b\in k[x]$ and $(a,b)=1$. Let $b=\prod_{i=1}^n p_i^{m_i}$ be a decomposition into pairwise distinct irreducible factors. Then, there exists the \textbf{partial fraction decomposition} (PFD) of $f$:
    \begin{itemize}
        \item A polynomial $Q\in k[x]$.
        \item For each $1\le i\le n$ and $1\le j\le m_i$ a polynomial $s_{ij}\in k[x]$ with $\op{deg}(s_{ij})<\op{deg}(p_i)$.
    \end{itemize}
    These polynomials are\textbf{ unique} such that the following equality holds:
    \[
    f=Q+\sum_{i=1}^n\sum_{j=1}^{m_i} \frac{s_{ij}}{p_i^j}.
    \]
    In particular, if $k$ is algebraically closed, then, without any loss of generality, we can assume that $p_i=x-a_i$, so $s_{ij}\in k$ and we define:
    \begin{enumerate}
        \item Polynomial Part: $P(f)=Q$
        \item Logarithmic Part: $L(f)=\sum_{i=1}^n\frac{s_{i1}}{x-a_i}$, and we call $s_{i1}$ the residue of $f$ at $a_i$.
        \item Higher Part: $H(f)=\sum_{i>1}^n\sum_{j=1}^{m_i} \frac{s_{ij}}{{(x-a_i)}^j}$
    \end{enumerate}
    In total, we have $f=PFD(f)=P(f)+L(f)+H(f)$.
\end{thm}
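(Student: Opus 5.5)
The statement to prove is the Partial Fraction Decomposition (Theorem \ref{thm: PFD}). It is classical and holds over any field, so the plan is purely algebraic and uses only that $k[x]$ is a Euclidean domain. The proof splits into existence, uniqueness, and the specialization to algebraically closed $k$.

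\textbf{Existence.}
\begin{enumerate}
    \item Write $a = Qb + r$ with $\deg r < \deg b$ by Euclidean division. This produces the polynomial part $Q$ and reduces the problem to the proper fraction $r/b$.
    \item Use induction on the number $n$ of distinct irreducible factors. Set $u = p_1^{m_1}$ and $v = \prod_{i\ge 2} p_i^{m_i}$. These are coprime, so Bézout gives $\alpha u + \beta v = 1$.
    \item Multiplying by $r/(uv)$ gives $r/b = r\beta/u + r\alpha/v$. Reduce each numerator modulo its denominator; the polynomial parts that drop out must sum to $0$, because $\deg r < \deg b$. This splits $r/b$ into proper fractions with denominators $p_1^{m_1}$ and $v$.
    \item For a single prime power $s/p^m$ with $\deg s < m\deg p$, expand $s$ in base $p$: $s = \sum_{j=0}^{m-1} t_j p^j$ with $\deg t_j < \deg p$. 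This is obtained by repeated division by $p$. Then $s/p^m = \sum_j t_j/p^{m-j}$, which is the required shape.
\end{enumerate}

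\textbf{Uniqueness.} Take two decompositions and subtract them. This gives an identity $Q + \sum_{i,j} s_{ij}/p_i^j = 0$ in which the $s_{ij}$ are differences. Suppose some $s_{ij} \neq 0$ and choose $i$ and the maximal such $j$. Multiply the identity by $b$ and reduce modulo $p_i$. Every term is then divisible by $p_i$ except $s_{ij}\, b/p_i^{j}$, up to cofactors coprime to $p_i$. Since $\deg s_{ij} < \deg p_i$ and $s_{ij} \neq 0$, this term is not divisible by $p_i$, which is a contradiction. So all $s_{ij} = 0$, and then $Q = 0$.

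A cleaner alternative is a dimension count. For fixed $b$, the map from tuples $(Q,(s_{ij}))$ to $k(x)$ is linear. Restricted to $\deg Q < \deg a - \deg b$ (or a bounded degree), the domain and the target space of fractions $c/b$ with bounded $\deg c$ have equal dimension. Surjectivity is existence, so the map is injective. I would use this argument.

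\textbf{Algebraically closed case.} Here every irreducible polynomial is linear, $p_i = x - a_i$, so $\deg s_{ij} < 1$ forces $s_{ij} \in k$. The definitions of $P$, $L$ and $H$ then simply group the terms by $j$.

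There is no real obstacle. The only delicate point is the bookkeeping in the uniqueness step, or equivalently the dimension count; I would use the linear-algebra count to avoid casework.
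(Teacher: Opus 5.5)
Your proof is correct. The existence half is the argument the paper has in mind. The paper gives no proof of its own: it cites the reference and remarks that existence comes from the Euclidean algorithm, and uniqueness from the uniqueness of quotient and remainder in $k[x]$. Your division/B\'ezout/base-$p$ expansion is that algorithm.

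Your uniqueness argument takes a genuinely different route. Made precise, the paper's route shows that every ingredient of an arbitrary decomposition is forced:
\begin{enumerate}
\item $Q$ must be the quotient of $a$ by $b$;
\item each block $\sum_j s_{ij}p_i^{m_i-j}$ must be the unique remainder of degree $<m_i\deg p_i$ solving a congruence modulo $p_i^{m_i}$;
\item the $s_{ij}$ are then its unique base-$p_i$ digits.
\end{enumerate}
Your dimension count instead gets uniqueness for free from existence. It uses only that $\sum_i m_i\deg p_i=\deg b$ and that degree-bounded pieces of $k[x]$ are finite-dimensional over $k$. It is shorter and avoids casework. The paper's route explains step by step why the algorithm's output is canonical, which fits its Hermite-reduction viewpoint.

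There are two small bookkeeping points.
\begin{enumerate}
\item In the count, the bound $\deg Q<\deg a-\deg b$ is off by one. When $\deg a\ge\deg b$, the actual polynomial part has degree exactly $\deg a-\deg b$, so it would fall outside your domain. Instead take any $N$ larger than the degrees of both polynomial parts being compared. Then the domain $\{\deg Q<N\}\times\prod_{i,j}\{\deg s_{ij}<\deg p_i\}$ and the target $\{c/b:\deg c<N+\deg b\}$ both have dimension $N+\deg b$. Surjectivity needs existence for all numerators $c$, not just those coprime to $b$. Your existence proof does supply this, since it never uses $(a,b)=1$.
\item In your first uniqueness sketch, multiplying by $b$ does not isolate the $(i,j)$ term unless $j=m_i$. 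For $j<m_i$ the factor $b/p_i^{j}$ is still divisible by $p_i$. Multiply instead by $p_i^{j}\prod_{i'\ne i}p_{i'}^{m_{i'}}$. Since you chose the dimension count, this does not affect your proof.
\end{enumerate}
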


\begin{remark}
    The separation of PFD into P,L,H is dictated by what indefinite integrals of these pieces are. For $P$, $\int P$ is a polynomial, so a rational function; for $H$, $\int H$ is a rational function; for $L$, $\int L$ is a logarithm of ``almost'' a rational/algebraic function.
\end{remark}

\begin{remark}
    The existence follows from the Euclidean algorithm, see \cite{PFD_euclidean_1989}. The uniqueness is true because, for $k[x]$, this algorithm always yields unique $q,r$ in any division $a=bq+r$. The uniqueness of PFD is likely to be true only for rings $k[x]$, because that is true for the Euclidean algorithm.

    The partial fraction decomposition and the Hermite reduction mentioned in \cite{rational_first_for_separated_2025} are basically the same concepts. Though Hermite reduction is more about how to compute these things in practice without $k=\overline{k}$. The references to specific algorithms can be found inside that paper.
\end{remark}

We introduce a bracket notation for a coefficient with respect to a $p$-basis. A general discussion about bracket notations can be found in \cite{on_bracket_Knuth_1994}.

\begin{defin} We introduce a notation for an $n$-coefficients of an element.
\begin{enumerate}
    \item Let $k$ be a field. Let $K$ be $k[x]$, or $k((x))$ Laurent series. Let $n$ be an integer.
    Let $f\in K$ be given by $f=\ldots+a_{-1}x^{-1}+a_0+a_1x+a_2x^2+\ldots$, where $a_i\in k$. We define:
    \begin{equation}
        [x^{n}]f \coloneqq a_n \in k.
    \end{equation}
    \item Let $K$ be a generic point of a curve with a coordinate $x$, or $k((x))$.
    Then, $1,x,x^2,\ldots,x^{p-1}$ is a $p$-basis of $K$ over $K^p$.
    Let $n\in \{0,1,\ldots,p-1\}$. Let $f\in K$. It has a unique presentation of the form $f=a_0+a_1x+a_2x^2+\ldots+a_{p-1}x^{p-1}$, where $a_i\in K^p$. We define:
    \begin{equation}
        \widetilde{[x^{n}]}f \coloneqq a_n \in K^p.
    \end{equation}

\end{enumerate}
\end{defin}

\begin{remark}[Hasse Invariant?]
    The element $\widetilde{[x^{p-1}]}f^{p-1}$ for $D=f\frac{\partial}{\partial x}$ can be considered as a generalization of the Hasse invariant of an elliptic curve to any vector field on any curve, see \cite[Proposition 4.21]{Hartshorne77} and \cite[page 354]{Mazur_Katz_elliptic_curves} for a series of equivalent definitions for elliptic curves.
\end{remark}

\begin{lemma}
    Let $k$ be a perfect field. 

    \begin{enumerate}
        \item Let $f\in k(x)$. Then the inclusion $k(x)\to k((x))$ preserves the value of $\widetilde{[x^{n}]}f$.
        \item Let $f\in k((x))$ and $n\in \{0,1,\ldots,p-1\}$, then if $f=\sum_{n\in \mathbb{Z}} a_n x^n$, then
        \begin{equation}\label{eq: tilde x^n in k((x))}
            \widetilde{[x^{n}]}f = \sum_{\substack{m=pk+n,\\ k\in \mathbb{Z}}} x^{pk}a_m.
        \end{equation}
        Consequently, for any $f,g\in k((x))$ we have $\widetilde{[x^{n}]}f+\widetilde{[x^{n}]}g=\widetilde{[x^{n}]}(f+g)$.
        \item Let $f\in k((x)), g\in k((x))^p$, and $n\in \{0,1,\ldots,p-1\}$, then
        \begin{equation}
        \widetilde{[x^{n}]}(gf)=g \cdot \widetilde{[x^{n}]}f.
        \end{equation}
        In particular, if $f\ne0$, then we have
        \begin{equation}\label{eq: trick fp 1/f}
        \widetilde{[x^{p-1}]}f^{p-1}=f^p \cdot \widetilde{[x^{p-1}]}\frac{1}{f}.
        \end{equation}
    \end{enumerate}
\end{lemma}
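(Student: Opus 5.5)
We will prove the three items in order, the key point being that $1,x,\ldots,x^{p-1}$ is simultaneously a $p$-basis of $k(x)$ over $k(x)^p$ and of $k((x))$ over $k((x))^p$.

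\textbf{Item (2).} We first establish the $p$-basis claim for $k((x))$ directly. Since $k$ is perfect, every $a\in k$ is a $p$-th power, so $k((x))^p=\{\sum_j b_j x^{pj}\}$ is the set of Laurent series supported on exponents divisible by $p$. Given $f=\sum_m a_m x^m$, we group the exponents by their residue class modulo $p$, writing $m=pk+n$ with $n\in\{0,\ldots,p-1\}$. Because Laurent series have bounded-below support, each regrouped sum $\sum_k a_{pk+n}x^{pk}$ is again a Laurent series. This gives
\[
f=\sum_{n=0}^{p-1}\Bigl(\sum_k a_{pk+n}x^{pk}\Bigr)x^n .
\]
Uniqueness holds because distinct $n$ give disjoint supports. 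This yields formula \eqref{eq: tilde x^n in k((x))}, and additivity follows immediately since the formula is linear in the coefficients $a_m$.

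\textbf{Item (1).} Let $f=\sum_{n=0}^{p-1} c_n x^n$ be the expansion in $k(x)$ with $c_n\in k(x)^p$. Under the embedding $k(x)\subset k((x))$ we have $k(x)^p\subset k((x))^p$. Hence the same identity is an expansion in $k((x))$ over $k((x))^p$, and by the uniqueness from (2) the coefficients agree. We should also confirm that $1,\ldots,x^{p-1}$ is a $p$-basis of $k(x)$. This holds because $[k(x):k(x)^p]=[k(x):k(x^p)]=p$ (using $k^p=k$), and the minimal polynomial of $x$ over $k(x^p)$ is $T^p-x^p$. This is the step that needs the most care: it is where perfectness of $k$ is used, and where one must check that the two notions of $\widetilde{[x^n]}$ are compatible.

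\textbf{Item (3).} If $f=\sum_n a_n x^n$ with $a_n\in K^p$ and $g\in K^p$, then $gf=\sum_n (ga_n)x^n$ with $ga_n\in K^p$. By uniqueness, $\widetilde{[x^n]}(gf)=g\,a_n$. For the displayed consequence, take $g=f^p$, which lies in $K^p$, applied to $1/f$:
\[
\widetilde{[x^{p-1}]}\bigl(f^p\cdot f^{-1}\bigr)=f^p\,\widetilde{[x^{p-1}]}\tfrac1f ,
\]
and $f^p\cdot f^{-1}=f^{p-1}$. Everything after item (1) is routine bookkeeping.
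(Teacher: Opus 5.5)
Your proposal is correct and follows the paper's argument. Both rely on uniqueness of coordinates with respect to the common $p$-basis $1,x,\ldots,x^{p-1}$, on grouping exponents modulo $p$, and on linearity over $K^p$. The only difference is that you verify the $p$-basis claims for $k(x)$ and $k((x))$ explicitly, which is where perfectness of $k$ enters, rather than asserting them. Accordingly, you prove (2) before (1) so that uniqueness in $k((x))$ is available.
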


\begin{proof} We prove:

    (1) Both $k(x),k((x))$ have the same basis over their $p$-powers, so the coefficients must agree.

    (2) The coefficients in Equation \ref{eq: tilde x^n in k((x))} are $p$-powers that satisfy $f=a_0+a_1x+a_2x^2+\ldots+a_{p-1}x^{p-1}$, and the coefficients of a sum are sums of the coefficients.

    (3) If $f=a_0+a_1x+a_2x^2+\ldots+a_{p-1}x^{p-1}$, then $gf=ga_0+ga_1x+ga_2x^2+\ldots+ga_{p-1}x^{p-1}$. 
    
    And the last part is the above applied to $1/f$ and $f^p$.
\end{proof}

\begin{lemma}\label{lem: n>=2 coeff is zero}
    Let $r\ge 1$ and $n\ge 2$ be integers. Let $p$ be a prime number such that $p>n-1$.
    Then
    \[
    p \text{ divides } \binom{n+(pr-1)-1}{pr-1}={(-1)}^{pr-1}\binom{-n}{pr-1}
    \]
    Moreover, if $p=n-1$, then divisibility no longer holds by Wilson's theorem and its generalizations \cite{Wolstenholme_theorem_history}.
\end{lemma}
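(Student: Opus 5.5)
The plan is to rewrite the binomial coefficient as a short product of consecutive integers divided by a small factorial, and then read off the $p$-adic valuation directly. First I dispose of the displayed equality, which is the standard identity for binomial coefficients with negative upper index. For any integer $k\ge 0$,
\[
\binom{-n}{k}=\frac{(-n)(-n-1)\cdots(-n-k+1)}{k!}={(-1)}^{k}\frac{n(n+1)\cdots(n+k-1)}{k!}={(-1)}^{k}\binom{n+k-1}{k}.
\]
Applying this with $k=pr-1$ and multiplying by ${(-1)}^{pr-1}$ gives the claimed equality.

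For the divisibility I use the symmetry $\binom{N}{k}=\binom{N}{N-k}$ with $N=n+pr-2$ and $k=pr-1$, so that $N-k=n-1$. This gives
\[
\binom{n+pr-2}{pr-1}=\binom{n+pr-2}{n-1}=\frac{(pr)(pr+1)\cdots(pr+n-2)}{(n-1)!}.
\]
The numerator consists of $n-1\ge 1$ factors, and the smallest of them is exactly $pr$. This is the only place the hypothesis $n\ge 2$ enters: it guarantees that the product is nonempty and therefore contains the factor $pr$. Hence $p$ divides the numerator. On the other hand, $p>n-1$ means every factor of $(n-1)!$ is smaller than $p$, so $p\nmid (n-1)!$. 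Since the quotient is an integer and $p$ is prime, $p$ divides $\binom{n+pr-2}{pr-1}$. No step here is a real obstacle; the only care needed is the index bookkeeping showing that the product starts exactly at $pr$.

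For the ``moreover'' part, set $n=p+1$. The coefficient becomes
\[
\binom{pr+p-1}{pr-1}=\binom{pr+p-1}{p}.
\]
Now $(n-1)!=p!$ contains exactly one factor of $p$, and this can cancel the factor $pr$ in the numerator. I evaluate it modulo $p$ with Lucas' theorem. In base $p$ we have $pr+p-1=r\cdot p+(p-1)$ and $p=1\cdot p+0$, which gives
\[
\binom{pr+p-1}{p}\equiv\binom{r}{1}\binom{p-1}{0}=r \pmod p.
\]
So divisibility fails whenever $p\nmid r$, for instance for $r=1$, where the residue is $1$. The case $r=1$ can also be checked directly from $\binom{2p-1}{p-1}\equiv 1 \pmod p$, which is the Wilson-type congruence the statement cites. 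Thus the ``moreover'' claim should be read as: divisibility fails for suitable $r$, and in particular for every $r$ not divisible by $p$.
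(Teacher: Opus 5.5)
Your proof is correct and is essentially the paper's argument. The paper writes $\binom{n+pr-2}{pr-1}=\frac{pr}{n-1}\binom{n+pr-2}{n-2}$ and uses $p\nmid n-1$; this isolates the factor $pr$ exactly as you do with the full product over $(n-1)!$. Your Lucas-theorem computation $\binom{pr+p-1}{p}\equiv r \pmod p$ for the ``moreover'' clause goes beyond the paper, which only asserts that clause by citation. It also correctly shows that divisibility fails precisely when $p\nmid r$ (e.g.\ $r=1$), not for every $r$.
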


\begin{proof}
    The equality with the minus sign follows from the general definition of the Newton symbol for arbitrary numbers.

    The divisibility is elementary. We have $\binom{n+(pr-1)-1}{pr-1}=\frac{pr}{n-1}\binom{n+(pr-1)-1}{n-2}$, since $p>n-1$, thus it is not canceled by any factor from $n-1$.
\end{proof}

The following are the core theorems of this paper.

\begin{thm}[Main Formula]\label{Main Formula}
    Let $k$ be a perfect field. Let $K= k(x)$.
    Let $f\in K$ be a nonzero element.
    Let $PFD(\frac{1}{f})=P+L+H$ be its partial fraction decomposition (over $\overline{k}$), see Theorem \ref{thm: PFD}.

    Let $e$ be the degree of $P$. Let $M$ be the highest order of pole in $H$. Let $p$ be a prime number such that $p-1>e$ and $p>M-1$.

    Let $D=f\frac{\partial}{\partial x}$.
    Then we have that
    \begin{equation}
        D^p= \mu D \hspace{1 cm} \text{, where } \mu=f^p\overline{L}=\frac{\overline{L}}{P^p+L^p+H^p}\in K^p
    \end{equation}
    and $\overline{L}=\sum_{i=1}^d \frac{c_i}{x_i^p-a_i^p}$ if ${L}=\sum_{i=1}^d \frac{c_i}{x_i-a_i}$.
\end{thm}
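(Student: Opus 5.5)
The plan is to combine Proposition \ref{prop: computing p-powers on curves} with the identity \eqref{eq: trick fp 1/f}. That reduces the whole statement to computing the single coefficient $\widetilde{[x^{p-1}]}\frac{1}{f}$, and this I will do term by term in the partial fraction decomposition.

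By Proposition \ref{prop: computing p-powers on curves} we have $D^p=-b_{p-1}D$. Here $b_{p-1}$ is the coefficient of $\frac{x^{p-1}}{(p-1)!}$ in $f^{p-1}$, so $b_{p-1}=(p-1)!\,\widetilde{[x^{p-1}]}f^{p-1}$. By Wilson's theorem (as in the remark after that proposition) this gives $D^p=\widetilde{[x^{p-1}]}f^{p-1}\cdot D$. Then \eqref{eq: trick fp 1/f} turns this into $D^p=f^p\cdot\widetilde{[x^{p-1}]}\frac1f\cdot D$. So it suffices to show $\widetilde{[x^{p-1}]}\frac1f=\overline{L}$.

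To use the decomposition over $\overline{k}$, I pass from $K=k(x)$ to $\overline{k}(x)$. Since $k$ is perfect, $1,x,\dots,x^{p-1}$ is a $p$-basis of both $k(x)$ over $k(x)^p$ and $\overline{k}(x)$ over $\overline{k}(x)^p$, and $k(x)^p\subset\overline{k}(x)^p$. By uniqueness of coefficients, $\widetilde{[x^{p-1}]}$ computed in either field agrees. The final $\mu=f^p\overline{L}$ then automatically lies in $K^p$, because it equals $\widetilde{[x^{p-1}]}f^{p-1}$ computed in $K$. By additivity of $\widetilde{[x^{p-1}]}$ (part (2) of the preceding lemma), I treat the three kinds of summands separately:
\begin{enumerate}
\item \emph{Polynomial part.} $P$ has degree $e<p-1$, so its expansion in the basis $x^j$, $0\le j\le p-1$, has constant coefficients in $\overline{k}=\overline{k}^p$ and no $x^{p-1}$ term. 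Hence $\widetilde{[x^{p-1}]}P=0$.
\item \emph{Logarithmic part.} Write $\frac{c}{x-a}=\frac{c}{x^p-a^p}(x-a)^{p-1}$. Here $\frac{c}{x^p-a^p}\in\overline{k}(x)^p$ because $\overline{k}$ is perfect, and $(x-a)^{p-1}$ is a polynomial of degree $p-1$ with leading coefficient $1$. By part (3) of the lemma, $\widetilde{[x^{p-1}]}\frac{c}{x-a}=\frac{c}{x^p-a^p}$. Summing over the poles gives $\overline{L}$.
\item \emph{Higher part.} For $2\le j\le M$, the hypothesis $p>M-1$ gives $j\le p$. Write $\frac{s}{(x-a)^j}=\frac{s}{(x^p-a^p)}(x-a)^{p-j}$. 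The polynomial $(x-a)^{p-j}$ has degree $p-j<p-1$, so the coefficient is $0$. (Alternatively one can expand in $k((x-a))$ and use Lemma \ref{lem: n>=2 coeff is zero}, but the direct argument seems cleaner.)
\end{enumerate}

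Summing these gives $\widetilde{[x^{p-1}]}\frac1f=\overline{L}$, hence $\mu=f^p\overline{L}=\overline{L}/(P^p+L^p+H^p)$.

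I expect the only delicate points to be bookkeeping rather than real obstacles. The first is the sign and factorial normalization relating $b_{p-1}$ to $\widetilde{[x^{p-1}]}$. The second is justifying that the coefficient operator commutes with the base change to $\overline{k}$, so that the decomposition over $\overline{k}$ may be used even though $k$ need not be algebraically closed. I will check the sign carefully against the remark on Wilson's theorem. For the base change I will argue purely by uniqueness of $p$-basis coordinates.
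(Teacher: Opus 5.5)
Your proof is correct, and its skeleton is the paper's. Proposition \ref{prop: computing p-powers on curves}, Wilson's theorem and \eqref{eq: trick fp 1/f} reduce everything to computing $\widetilde{[x^{p-1}]}\frac{1}{f}$ summand by summand in the partial fraction decomposition. You differ from the paper in how the logarithmic and higher summands are evaluated.

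The paper embeds into Laurent series at $x=0$. It expands $\frac{c}{x-\alpha}$ as a geometric series and $\frac{c}{(x-\alpha)^m}$ as a binomial series, collects the coefficients of $x^{pk+p-1}$ via \eqref{eq: tilde x^n in k((x))}, and kills the higher part with the divisibility Lemma \ref{lem: n>=2 coeff is zero}. Only the pole $\alpha=0$ is handled there by factoring out a $p$-th power.

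You apply that factoring uniformly, since $(x-a)^p=x^p-a^p$ is a $p$-th power for every $a\in\overline{k}$. Each summand then becomes an element of $\overline{k}(x)^p$ times a polynomial of degree at most $p-1$ with coefficients in $\overline{k}$, and the coefficient of $x^{p-1}$ is read off directly. This removes the power series, the binomial lemma and the case split on $\alpha$. It also shows at a glance what happens without the bound on $p$: a term $\frac{s}{(x-a)^j}$ contributes exactly when $j\equiv 1 \pmod p$.

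You also make explicit two points the paper leaves implicit. One is the sign coming from $(p-1)!\equiv -1$. The other is the invariance of $\widetilde{[x^{p-1}]}$ under $k(x)\subset\overline{k}(x)$. This invariance is genuinely needed because the decomposition lives over $\overline{k}$; the paper's expansions really take place in $\overline{k}((x))$, not $k((x))$.

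The paper's version stays within the coefficient description \eqref{eq: tilde x^n in k((x))} it had just set up. Yours is shorter and more uniform.
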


\begin{proof}
    By Proposition \ref{prop: computing p-powers on curves}, we have that
    \[
    D^p= \mu D \hspace{1 cm} \text{, where } \mu=\widetilde{[x^{p-1}]}f^{p-1}.
    \]  
    By Equation \ref{eq: trick fp 1/f}, we have that
    \[
    \widetilde{[x^{p-1}]}f^{p-1}=f^p \cdot \widetilde{[x^{p-1}]}\frac{1}{f}.
    \]
    Thus we compute $\widetilde{[x^{p-1}]}\frac{1}{f}$.

    Let $PFD(\frac{1}{f})=P+L+H$, see Theorem \ref{thm: PFD}, where
    \begin{enumerate}
        \item $P=a_0+a_1x+\ldots+a_e x^e$,
        \item $L=\sum_{i=1}^d\frac{c_i}{x-\alpha_i}$,
        \item $H=\sum_{i=1}^d\sum_{m>1}\frac{c_{i,m}}{(x-\alpha_i)^m}$.
    \end{enumerate}
    Thus $M$ is the highest $m$ such that there is $i$ such that $c_{i,m}\ne 0$.
    
    We compute the contribution of each part separately:
    \begin{enumerate}
        \item If $p-1>e$, then $\widetilde{[x^{p-1}]}P=0$ by Equation \ref{eq: tilde x^n in k((x))}.
        \item We fix $i$, we assume $\alpha_i\ne 0$ and we compute:
        \[
        \frac{c_i}{x-\alpha_i}=\frac{-\frac{c_i}{\alpha_i}}{1-\frac{x}{\alpha_i}}=-\frac{c_i}{\alpha_i}\sum_{n\ge 0}\left(\frac{x}{\alpha_i}\right)^n=\sum_{n\ge 0}\frac{-c_i}{\alpha_i^{n+1}}x^n
        \]
        so we can conclude, by Equation \ref{eq: tilde x^n in k((x))}, that
        \[
        \widetilde{[x^{p-1}]}\frac{c_i}{x-\alpha_i}=\sum_{k\ge 0}\frac{-c_i}{\alpha_i^{p(k+1)}}x^{pk}=\frac{c_i}{x^p-\alpha_i^p}.
        \]
        For $\alpha_i=0$, we directly get $\widetilde{[x^{p-1}]}(\frac{c_i}{x_i})=\widetilde{[x^{p-1}]}(\frac{c_i}{x_i^p}){x_i}^{p-1}=\frac{c_i}{x_i^p}$.
        
        \item We fix $m,i$, and assume $\alpha_i\ne 0$ and compute:
        \[
        \frac{c_{i,m}}{(x-\alpha_i)^m}=\frac{c_{i,m}}{(-\alpha_i)^m(1-\frac{x}{\alpha_i})^m}=\frac{c_{i,m}}{(-\alpha_i)^m}\sum_{n\ge 0} \binom{-m}{ n}\left(\frac{-x}{\alpha_i^m}\right)^n=\frac{c_{i,m}}{(-\alpha_i)^m}\sum_{n\ge 0} \binom{m+n-1}{ n}\left(\frac{x}{\alpha_i^m}\right)^n.
        \]

        Therefore, for $r\ge 1$, $m\ge 2$, $p>M-1\ge m-1$ by Lemma \ref{lem: n>=2 coeff is zero}, we have 
        \[
        [x^{pr-1}]\frac{c_{i,m}}{(x-\alpha_i)^m}=\frac{c_{i,m}}{(-\alpha_i)^m \alpha_i^{m(pr-1)}}\binom{m+(pr-1)-1}{pr-1}=0.
        \]
        Consequently, by Equation \ref{eq: tilde x^n in k((x))}, we have
        \[
        \widetilde{[x^{p-1}]}\frac{c_{i,m}}{(x-\alpha_i)^m}=\sum_{r\ge 1} x^{p(r-1)}[x^{pr-1}]\frac{c_{i,m}}{(x-\alpha_i)^m}=0.
        \]
        For $\alpha_i=0$, we directly have
        $\widetilde{[x^{p-1}]}\frac{c_{i,m}}{x^m}=\widetilde{[x^{p-1}]}\frac{c_{i,m}}{x^p}x^{p-m}=0$, because $p-1>p-m\ge 0$.
    \end{enumerate}
    We conclude the main formula by joining the above computations.
\end{proof}

\begin{thm}[Main Theorem for $n=1$ and $p>0$]\label{thm: main: n=1, p>0}
    We follow the assumptions and notations from Theorem \ref{Main Formula}. In particular, the prime $p$ is assumed to be big enough. The following are equivalent:
    \begin{enumerate}
        \item $\frac{\overline{L}}{P^p+L^p+H^p}=\mu \in \overline{k}$,
        \item One of the following is true:
        \begin{enumerate}
            \item $L=0$. In this case: $\mu=0$.
            \item $P=H=0$ and there exists $\eta\in \overline{k}$, $\eta \ne 0$ such that each $c_i$ satisfies $(\eta x)^{p}=(\eta x)$. 
            
            In this case: $\mu \ne 0$ and $\eta^{p-1}=\mu$.
        \end{enumerate}
    \end{enumerate}
\end{thm}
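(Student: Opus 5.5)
The plan is to compare the two sides of $\overline{L}=\mu\,(P^p+L^p+H^p)$ as rational functions in the new variable $t=x^p$ over $\overline{k}$, using the uniqueness part of the partial fraction decomposition (Theorem \ref{thm: PFD}). Throughout, $\mu=\frac{\overline{L}}{P^p+L^p+H^p}=f^p\overline{L}$, as given by Theorem \ref{Main Formula}. Note that $P^p+L^p+H^p=(1/f)^p\neq 0$, since $f\neq 0$.

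\emph{Step 1: everything is a PFD in $t$.} Write $P=\sum a_jx^j$, $L=\sum_i \frac{c_i}{x-\alpha_i}$ and $H=\sum_{i,m\ge 2}\frac{c_{i,m}}{(x-\alpha_i)^m}$ with pairwise distinct $\alpha_i$ and nonzero $c_i$. Frobenius is additive and multiplicative, so
\[
P^p=\sum a_j^p t^j,\qquad L^p=\sum_i\frac{c_i^p}{t-\alpha_i^p},\qquad H^p=\sum_{i,m\ge2}\frac{c_{i,m}^p}{(t-\alpha_i^p)^m},
\]
while $\overline{L}=\sum_i \frac{c_i}{t-\alpha_i^p}$. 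Since Frobenius is injective, the points $\alpha_i^p$ are pairwise distinct. Moreover, $c\mapsto c^p$ sends nonzero elements to nonzero elements, so these expressions are genuinely the polynomial, logarithmic and higher parts, in the variable $t$, of the rational functions $P^p$, $L^p$ and $H^p$ respectively. Here we use that $x\mapsto x^p$ gives $\overline{k}(x)^p=\overline{k}(t)$, because $\overline{k}$ is perfect.

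\emph{Step 2: (1)$\Rightarrow$(2).} Suppose $\mu\in\overline{k}$.

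If $\mu=0$, then $\overline{L}=0$. Uniqueness of the PFD in $t$ then forces all $c_i=0$, so $L=0$; this is case (a).

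If $\mu\neq0$, compare the parts of $\overline{L}=\mu P^p+\mu L^p+\mu H^p$. The left side has no polynomial part and no higher part, so uniqueness gives $\mu P^p=0$ and $\mu H^p=0$, hence $P=H=0$. Comparing residues at each $\alpha_i^p$ gives $c_i=\mu c_i^p$. Now choose $\eta\in\overline{k}$ with $\eta^{p-1}=\mu$, which is possible since $\overline{k}$ is algebraically closed; then $\eta\neq0$. We get
\[
(\eta c_i)^p=\eta\cdot\eta^{p-1}c_i^p=\eta\,\mu c_i^p=\eta c_i,
\]
which is case (b), with $\eta^{p-1}=\mu$.

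\emph{Step 3: (2)$\Rightarrow$(1).} In case (a), $\overline{L}=0$, so $\mu=0\in\overline{k}$.

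In case (b), set $\mu'=\eta^{p-1}\neq0$. The hypothesis $(\eta c_i)^p=\eta c_i$ gives $c_i=\mu' c_i^p$ for every $i$. Hence $\overline{L}=\mu' L^p=\mu'(P^p+L^p+H^p)$, and therefore $\mu=\mu'\in\overline{k}^\times$. Note that $L\neq0$ here, because $P=H=0$ and $1/f\neq0$; this is what makes the division legitimate.

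\emph{Main obstacle.} The only delicate point is Step 1: justifying that the Frobenius image of a PFD in $x$ is the PFD in $t=x^p$, so that uniqueness can be applied over $\overline{k}(t)$. This needs perfectness of $\overline{k}$ and injectivity of Frobenius, so that poles stay distinct and nonzero coefficients stay nonzero. The "$p$ large" hypothesis is not needed for this equivalence itself; it is only used, via Theorem \ref{Main Formula}, to identify $\mu$ with $D^p/D$.
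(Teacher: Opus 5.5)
Your proposal is correct and is essentially the paper's argument. The paper likewise writes $\mu P^p+(\mu L^p-\overline{L})+\mu H^p=0$ as a partial fraction decomposition of $0$ and uses uniqueness to get $\overline{L}=0$ (hence $L=0$) when $\mu=0$, and $P=H=0$, $\mu c_i^p=c_i$ when $\mu\neq 0$; it then takes $\eta^{p-1}=\mu$ and proves the converse by the same direct computation. Your Step 1 makes explicit what the paper leaves implicit: Frobenius carries the decomposition in $x$ to a genuine decomposition in $t=x^p$, with distinct poles and nonzero coefficients. Your remark that the large-$p$ hypothesis is needed only to identify $\mu$ with the coefficient in $D^p=\mu D$ is also accurate.
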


\begin{proof}
    (1) implies (2). 
    This is an easy consequence of PFD's uniqueness, Theorem \ref{thm: PFD}. 
    
    Indeed, let $\mu\in \overline{k}$ be the one from (1), then
    \[
    \mu P^p+ (\mu L^p-\overline{L})+ \mu H^p =0
    \]
    This is a PFD decomposition for $0$. The orders in $(\mu L^p-\overline{L})$ and $\mu H^p$ are different, so the uniqueness forces:
    \[
    \mu P^p=0, \hspace{1 cm}  \mu L^p-\overline{L}=0, \hspace{1 cm} \mu H^p=0
    \]
    If $\mu=0$, then $\overline{L}=0$, thus $L=0$.

    If $\mu\ne 0$, then $P^p=H^p=0$, so $P=H=0$, and $\mu L^p-\overline{L}=0$. This last condition means that for every $i$ we have $\mu c_i^p-c_i=0$. We take $\eta^{p-1}=\mu$, then $(\eta c_i)^p=\eta\mu c_i^p=\eta c_i$. 

    (2) implies (1). 
    
    If (a), then all $c_i$ are zero, and thus $\overline{L}=0$, and consequently $\mu=0\in \overline{k}$.

    If (b), then $\mu =\frac{\overline{L}}{L^p}$. Let $\eta \ne 0$, $\eta \in \overline{k}$, be such that all $c_i$ satisfy $(\eta x)^{p}=(\eta x)$, then
    \[
    \mu=\frac{\overline{L}}{L^p}=\frac{\eta^p\overline{L}}{\eta^p L^p}=\eta^{p-1}\frac{\eta\overline{L}}{\eta^{p} L^p}=\eta^{p-1}\in \overline{k}.
    \]
    This finishes the proof.
\end{proof}

\begin{lemma}[What is this $\eta$ for, really?]\label{lemma: n=1, p>0, eta meaning}
    The condition (2b) in Theorem \ref{thm: main: n=1, p>0} can be equivalently restated: 
    the dimension of the vector space $\op{Span}_\mathbb{\mathbb{F}_p}(c_1,c_2,\ldots,c_d)\subset \overline{k}$ is $1$.

    In particular, if $c_i\ne0$, then we can take $\eta=1/c_i$.
\end{lemma}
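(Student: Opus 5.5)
The plan is to translate the condition ``$(\eta c_i)^p=\eta c_i$ for all $i$'' into a statement about $\mathbb{F}_p$-spans, using that the solutions of $X^p=X$ in $\overline{k}$ are exactly the elements of the prime field $\mathbb{F}_p\subset\overline{k}$. Here $k$ has characteristic $p$, as in Theorem \ref{thm: main: n=1, p>0}. Throughout, the $c_i$ are the residues appearing in $L$. In case (2b) we have $L\neq 0$, because otherwise $\mu=0$; so at least one $c_i$ is nonzero. Since the PFD lists only poles that actually occur, I will assume all listed $c_i$ are nonzero. If zero entries were allowed, they would not affect the span anyway.

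First I show that (2b) implies the span has dimension $1$. Fix $\eta\neq 0$ with $(\eta c_i)^p=\eta c_i$ for every $i$. Then each $\eta c_i$ is a root of $X^p-X=\prod_{a\in\mathbb{F}_p}(X-a)$, so $\eta c_i\in\mathbb{F}_p$. Hence every $c_i\in \eta^{-1}\mathbb{F}_p$, and $\op{Span}_{\mathbb{F}_p}(c_1,\ldots,c_d)\subseteq \eta^{-1}\mathbb{F}_p$, which is one-dimensional over $\mathbb{F}_p$. The span contains a nonzero $c_i$, so its dimension is exactly $1$.

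Next I show the converse. If the span has dimension $1$, pick any nonzero $c_{i_0}$. It spans the space, so every $c_j=a_jc_{i_0}$ for some $a_j\in\mathbb{F}_p$. Set $\eta=1/c_{i_0}$. Then $\eta c_j=a_j\in\mathbb{F}_p$, so $(\eta c_j)^p=\eta c_j$ by Fermat's little theorem. This proves the equivalence, and it also gives the ``in particular'' clause: any nonzero $c_i$ works, with $\eta=1/c_i$.

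One point needs care. Condition (2b) also requires $P=H=0$, and the reformulation should be read as replacing only the $\eta$-clause, with $P=H=0$ kept. There is no real obstacle here. The only substantive input is that $X^p-X$ splits with roots exactly $\mathbb{F}_p$ in a field of characteristic $p$.
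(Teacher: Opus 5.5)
Your proof is correct and is essentially the paper's argument: the paper fixes a nonzero $c_i$ and shows $c_j/c_i\in\mathbb{F}_p$ as a root of $X^p-X$, whereas you show $\eta c_j\in\mathbb{F}_p$ directly, and your converse with $\eta=1/c_i$ is identical to the paper's. One small correction: some $c_i$ is nonzero in case (2b) because $1/f=P+L+H\neq 0$ while $P=H=0$; your stated reason, that otherwise $\mu$ would vanish, is not the right justification.
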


\begin{proof}
    Indeed, there is $c_i\ne 0$, thus if $\op{dim}=1$, we can put $\eta=\frac{1}{c_i}$, then $\eta c_j\in\mathbb{F}_p$, so $(\eta c_j)^{p}=(\eta c_j)$. And if this $\eta \ne0$ exists, then $\frac{(\eta c_j)^{p}}{(\eta c_i)^{p}}=\left(\frac{c_j}{c_i}\right)^p=\frac{c_j}{c_i}$, so $\frac{c_j}{c_i}\in \mathbb{F}_p$, i.e., $c_i$ is a basis over $\mathbb{F}_p$.
\end{proof}

\begin{thm}[Main Theorem for $n=1$]\label{thm: main: n=1}
    Let $k$ be a field of characteristic zero. 
    Let $f\in k(x)$, $f\ne 0$. 
    Let $PHD(\frac{1}{f})=P+L+H$ with ${L}=\sum_{i=1}^d \frac{c_i}{x_i-a_i}$, see Theorem \ref{thm: PFD}.
    
    Let $D=f\frac{\partial}{\partial x}:k(x) \to k(x)$ be a derivation. 

    Then the following are equivalent:
    \begin{enumerate}
        \item There is a finite type model of $k$ over integers, such that for all all maximal ideals $q$ from an open dense subset of the model, $D$ reduces modulo $q$ and we have $\mu_q\in \overline{k(q)}$, where $D^p=\mu_q D$.
        \item One of the following is true:
        \begin{enumerate}
            \item $L=0$.
            \item $P=H=0$, and the vector space spanned by the residues of $\frac{1}{f}$, the vector space $\op{Span}_\mathbb{Q}(c_1,c_2,\ldots,c_d)\subset \overline{k}$, has dimension $1$ over rational numbers.
        \end{enumerate}
    \end{enumerate}
\end{thm}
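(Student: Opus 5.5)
The plan is to reduce Theorem \ref{thm: main: n=1} to its characteristic $p$ version, Theorem \ref{thm: main: n=1, p>0}, applied at the closed points of a finite type model, and then to use Theorem \ref{thm: kronecker ++} to pass from ``$\mathbb{F}_p$-dimension $1$ for almost all $q$'' to ``$\mathbb{Q}$-dimension $1$''.

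\textbf{Setup.} First I enlarge $k$ to a finitely generated subfield $k_0$ containing the coefficients of $f$, the poles $a_i$, the residues $c_i$ and the coefficients of $P$ and $H$. This is a finite extension of the field of definition, so it is harmless. I choose a finite type $\mathbb{Z}$-algebra model $A$ of $k_0$ and shrink it to an open dense subset on which the following hold:
\begin{itemize}
\item all these constants are integral;
\item the $a_i$ stay pairwise distinct;
\item the nonzero $c_i$, and the leading coefficients of $P$ and of the top terms of $H$, stay invertible.
\end{itemize}
On this subset the partial fraction decomposition of $\frac{1}{f}$ reduces to the partial fraction decomposition of $\frac{1}{f}$ modulo $q$, with the same $e$, $M$ and $d$. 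After discarding finitely many residue characteristics, $p-1>e$ and $p>M-1$ also hold. Then Theorem \ref{Main Formula} and Theorem \ref{thm: main: n=1, p>0} apply to $D$ modulo $q$. Hence (1) holds at $q$ if and only if either $L$ mod $q$ is $0$, or $P$ and $H$ mod $q$ are $0$ and the residues mod $q$ span an $\mathbb{F}_p$-space of dimension $1$ (Lemma \ref{lemma: n=1, p>0, eta meaning}). Because the shrinking preserves vanishing and nonvanishing, ``$L$ mod $q=0$'' is equivalent to $L=0$, and likewise for $P$ and $H$.

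\textbf{(2) implies (1).} In case (a), $\mu_q=0$ for all good $q$. In case (b), pick $c_1\ne 0$. Every $c_j/c_1$ is a rational number, so after inverting its denominators it reduces into $\mathbb{F}_p$. Therefore the $\mathbb{F}_p$-span of the reduced residues is $1$-dimensional, and Theorem \ref{thm: main: n=1, p>0}(2b) gives $\mu_q=\eta^{p-1}\in\overline{k(q)}$.

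\textbf{(1) implies (2).} Suppose $L\ne 0$. By the reduction above, for all $q$ in a dense open set we get $P=H=0$ and $\dim_{\mathbb{F}_p}\op{Span}(c_j \bmod q)=1$. Fix $c_1\ne 0$; it is a unit on the model. By Lemma \ref{lemma: n=1, p>0, eta meaning}, $c_j/c_1 \bmod q\in\mathbb{F}_p$, so $(c_j/c_1)^p=c_j/c_1$ modulo every such $q$. The second part of Theorem \ref{thm: kronecker ++}, applied to $c_j/c_1\in k_0$, then gives $c_j/c_1\in\mathbb{Q}$ for every $j$. This means $\dim_{\mathbb{Q}}\op{Span}(c_1,\dots,c_d)=1$.

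The step I expect to be most delicate is the compatibility of the partial fraction decomposition with reduction. The part $P+L+H$ is computed over $\overline{k}$, so its data must be defined over the model, and both the uniqueness in Theorem \ref{thm: PFD} and the vanishing pattern of $P$, $L$, $H$ must persist modulo $q$. The reduction also has to be over a dense open set of maximal ideals, so that Theorem \ref{thm: kronecker ++} genuinely applies. I would handle this by including in $A$ the inverses of all discriminants and differences $a_i-a_j$ and of all nonzero coefficients. Then, on the resulting open set, the reduced expression is again a partial fraction decomposition, and by uniqueness it is the partial fraction decomposition of the reduced $\frac{1}{f}$.
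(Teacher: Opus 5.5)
Your proposal is correct and follows the paper's route: at each good $q$ you apply Theorem~\ref{thm: main: n=1, p>0} with Lemma~\ref{lemma: n=1, p>0, eta meaning}, and then you apply Theorem~\ref{thm: kronecker ++} to the ratios $c_j/c_1$. The one difference is that you build the model so the poles stay distinct and the nonzero PFD coefficients are units. That makes the vanishing of $P$, $L$, $H$ modulo $q$ equivalent to their vanishing in characteristic zero, which replaces the paper's informal argument that the two alternatives cannot both occur for infinitely many $q$, and it also makes explicit the compatibility of the partial fraction decomposition with reduction that the paper leaves implicit.
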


\begin{proof} 

    \textbf{(1) implies (2).}

    By Theorem \ref{thm: main: n=1, p>0} and Lemma \ref{lemma: n=1, p>0, eta meaning} we learn that, possibly after enlarging characteristic $p$ of the residue fields $k(q)$ to have $p-1>e,p>M-1$, that our assumption implies, for each $q$ over such $p$ that either: $L=0$, or $P=H=0$ plus extra.

    If both happen infinitely many times, then $1/f$ would be forced to be zero as a sum of two zeros. This is a contradiction.
    So, by enlarging $p$ a bit more, we can assume that for all $q$ above $p$, only one of these cases holds.

    If $L=0$ holds modulo all our $q$, then $L=0$ before reductions.

    If $P=H=0$, then we get $L\ne 0$. Then there exists $c_i\ne 0$. Then for all $j$ we have $\left(\frac{c_j}{c_i}\right)^p=\frac{c_j}{c_i}$ modulo all our $q$, so by Kronecker Theorem ++ \ref{thm: kronecker ++}, we have $\frac{c_j}{c_i}\in \mathbb{Q}$, i.e., the vector space is of dimension $1$.

    \textbf{(2) implies (1).} 
    
    This is Theorem \ref{thm: main: n=1, p>0} (2) implies (1) and Lemma \ref{lemma: n=1, p>0, eta meaning}, a delegated direct check.
\end{proof}

\begin{defin}\label{def: constant-type}
Let $k$ be a field of characteristic zero.
    We say that $f\in k(X)$ with $f\ne 0$ is of \textbf{constant-type} if it satisfies the equivalent condition of Theorem \ref{thm: main: n=1}.
\end{defin}

\section{Algebraic Integrability}

\subsection{For the vector field}

We use the case $n=1$, Theorem \ref{thm: main: n=1}, to prove the following.

\begin{thm}[Main Theorem $n\ge 2$]\label{thm: main theorem n>=2}
    Let $k=\overline{\mathbb{Q}}$ be an algebraic closure of rational numbers.

Let $n\ge 2$ be an integer.
Let $x_1,x_2,\ldots, x_n$ be free variables.

Let $f_1,f_2,\ldots,f_n \in \overline{\mathbb{Q}}(X)$ be non-zero rational functions in one variable.

For each $i=1,2,\ldots,n$, let $PFD(\frac{1}{f_i})=P_i+L_i+H_i$, see Theorem \ref{thm: PFD}, with $L_i=\sum_{j=1}^{d_i}\frac{c_{i,j}}{X-\alpha_{i,j}}$. We define $V_i=\op{Span}_\mathbb{Q}(c_{i,1},c_{i,2},\ldots,c_{i,d_i})\subset \overline{\mathbb{Q}}$: the vector space spanned by residues of $\frac{1}{f_i}$.

Let $D=
f_1(x_1)\frac{\partial}{\partial x_1}+f_2(x_2)\frac{\partial}{\partial x_2}+\ldots+f_n(x_n)\frac{\partial}{\partial x_n}$ be a derivation on $\overline{\mathbb{Q}}(x_1,\ldots,x_n)$.

Then the following are equivalent:
\begin{enumerate}
    \item Let $K$ be a number field such that $f_i \in K(X)$. Let $\cO_K$ be the ring of integers of $K$.
    
    For almost all prime numbers $p$, and all prime ideals $q$ in $\cO_K$ over these $p$, the derivation $D$ is $p$-closed modulo $q$.
    \item One of the following is true:
    \begin{enumerate}
        \item For all $i$, we have $L_i=0$.
        \item For all $i$, we have $P_i=H_i=0$ and the dimension of $V_i$ is $1$. 
        
        And the dimension of $V_1+V_2+\ldots+V_n$ is $1$.
    \end{enumerate}
    \item The foliation $\F$ defined by $D$, i.e., $\F=\overline{\mathbb{Q}}(x_1,x_2,\ldots,x_n)D$, is algebraically integrable, i.e., the subfield
    $W=\op{Ann}(\F)=\{g\in \overline{\mathbb{Q}}(x_1,x_2,\ldots,x_n): D(g)=0\}$ is of transcendental degree $n-1$, and $\F=\op{Der}_W(\overline{\mathbb{Q}}(x_1,x_2,\ldots,x_n))$.
\end{enumerate}

Moreover, we can write down explicit transcendental bases:
\begin{itemize}
    \item (2a): Let $1<j\le n$. Then we can take:
        \begin{equation}
            F_{1j}\coloneqq \int \frac{dx_1}{f_1}-\int \frac{dx_j}{f_j} \in \overline{\mathbb{Q}}(x_1,x_j)\setminus \left(\overline{\mathbb{Q}}(x_1)\cup \overline{\mathbb{Q}}(x_j)\right).
        \end{equation}

    \item (2b): There exists $\eta \in \overline{\mathbb{Q}}$, $\eta \ne 0$, and an integer $N>0$ such that $N\eta c_{i,j}\in \mathbb{Z}$ for all $i,j$. Now, let $1<j\le n$. Then we can take:
    \begin{equation}
        G_{1j} \coloneqq \op{exp}\left(N\eta \left(\frac{dx_1}{f_1}-\int \frac{dx_j}{f_j}\right)\right)\in \overline{\mathbb{Q}}(x_1,x_j)\setminus \left(\overline{\mathbb{Q}}(x_1)\cup \overline{\mathbb{Q}}(x_j)\right).
    \end{equation}
\end{itemize}
    Thus we always have that $W$ is a separable closure of $\overline{\mathbb{Q}}(F_{ij})$ or $\overline{\mathbb{Q}}(G_{ij})$. 
\end{thm}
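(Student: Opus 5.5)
We follow the route announced in the introduction: prove (1)$\Leftrightarrow$(2), deduce (1) from (3) by Proposition \ref{prop: easy direction AlgIntConj}, and prove (2)$\Rightarrow$(3) by writing down the first integrals $F_{1j}$, $G_{1j}$ explicitly.

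\textbf{Reducing (1) to one-variable data.} Write $D=\sum_i D_i$ with $D_i=f_i(x_i)\partial_{x_i}$. The $D_i$ pairwise commute, since they involve disjoint variables, so the Lie polynomial $s$ in Jacobson's formula (Proposition \ref{prop: identities for p-lie}(3)) vanishes. Hence $D^p=\sum_i D_i^p$. For $p$ large relative to the degrees of the $P_i$ and the pole orders in the $H_i$, Theorem \ref{Main Formula} gives $D_i^p=\mu_i D_i$ with $\mu_i=f_i^p\overline{L_i}\in k(x_i)^p$. The vectors $D_i$ are linearly independent over $K=k(x_1,\dots,x_n)$, and $\F=KD$ has rank one. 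Therefore $D$ is $p$-closed modulo $q$ exactly when $\sum_i\mu_iD_i=\lambda\sum_iD_i$ for some $\lambda\in K$, that is, when $\mu_1=\dots=\mu_n$. Since $\mu_i$ depends only on $x_i$, all the $\mu_i$ equal one common constant $\mu_q\in\overline{k(q)}$.

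\textbf{(1)$\Leftrightarrow$(2).} Apply Theorem \ref{thm: main: n=1, p>0} to each $f_i$ with the same $\mu_q$. If $\mu_q=0$, then every $L_i$ vanishes mod $q$. If $\mu_q\neq0$, then every $P_i,H_i$ vanishes and there is a single $\eta$ with $\eta^{p-1}=\mu_q$ and $(\eta c_{i,j})^p=\eta c_{i,j}$ for all $i,j$. In the second case, all the ratios $c_{i,j}/c_{i',j'}$ of nonzero residues lie in $\mathbb{F}_p$ mod $q$. As in the proof of Theorem \ref{thm: main: n=1}, one case occurs for infinitely many $p$, so, after discarding finitely many $p$, only one case occurs, and it then holds in characteristic zero. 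In the first case this gives (2a). In the second case, Kronecker ++ (Theorem \ref{thm: kronecker ++}) applied to the ratios $c_{i,j}/c_{1,1}$ shows they are rational, so $\dim(V_1+\dots+V_n)=1$, which is (2b). The only subtle point is keeping a single $\eta$ for all $i$ simultaneously. This is exactly what equality of the $\mu_i$ provides, and it is why the joint span, not just each $V_i$, is one-dimensional. For the converse, (2) gives either $\mu_i=0$ for all $i$ or $\mu_i=\eta^{p-1}$ for all $i$ with the same $\eta=1/c$, where $c$ spans the joint space. So $D^p=\mu D$ and $D$ is $p$-closed.

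\textbf{(2)$\Rightarrow$(3), the expected main obstacle.} In case (2a), $1/f_i=P_i+H_i$ has a rational antiderivative $R_i\in k(x_i)$. Then $F_{1j}=R_1(x_1)-R_j(x_j)$ satisfies $D(F_{1j})=f_1R_1'-f_jR_j'=1-1=0$. In case (2b), take $\eta$ with $\eta c_{i,j}\in\mathbb{Q}$ and $N$ clearing denominators. Then $N\eta\int dx_i/f_i=\log\prod_j(x_i-\alpha_{i,j})^{N\eta c_{i,j}}$, so $G_{1j}=\prod(x_1-\alpha_{1,l})^{N\eta c_{1,l}}/\prod(x_j-\alpha_{j,l})^{N\eta c_{j,l}}$ is rational. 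Direct logarithmic differentiation gives $D(G_{1j})/G_{1j}=N\eta(1-1)=0$. In both cases the $n-1$ elements $F_{1j}$ (resp.\ $G_{1j}$) are algebraically independent. Indeed, each depends nonconstantly on its own variable $x_j$ (the relevant $R_j$ or product is nonconstant since $1/f_j\neq0$), so the Jacobian with respect to $x_2,\dots,x_n$ is diagonal with nonzero entries. Hence $\operatorname{trdeg}W\ge n-1$. Since $D\neq0$, also $\operatorname{trdeg}W\le n-1$. The derivations killing a transcendence basis of $W$ form a rank-one foliation containing $D$, so $\F=\op{Der}_W(K)$. Finally, $W$ is algebraic over $k(F_{1j})$ or $k(G_{1j})$, and in characteristic zero this extension is separable, which gives the closing claim.
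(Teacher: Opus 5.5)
Your proposal is correct and follows the paper's proof essentially step for step: (3)$\Rightarrow$(1) by Proposition \ref{prop: easy direction AlgIntConj}; (1)$\Leftrightarrow$(2) by splitting $D^p=\sum_i\mu_iD_i$ using the commuting summands, forcing all $\mu_i$ to equal one constant, and applying Kronecker ++ to ratios of residues; and (2)$\Rightarrow$(3) via the explicit $F_{1j}$, $G_{1j}$, where your Jacobian and rank arguments supply details the paper only asserts. For the closing ``separable closure'' claim you have only shown that $W$ is algebraic over $k(F_{1j})$ (resp.\ $k(G_{1j})$); the reverse inclusion takes one line, since for $g\in K$ algebraic over $W$ with minimal polynomial $P$, differentiating $P(g)=0$ gives $P'(g)D(g)=0$, hence $D(g)=0$.
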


\begin{remark}[It works over any field.]
    Theorem \ref{thm: main theorem n>=2} is true over any field $k$ of characteristic zero, not only over $\overline{\mathbb{Q}}$. Indeed, our main inputs, Kronecker's theorem ++ \ref{thm: kronecker ++} and Proposition \ref{prop: easy direction AlgIntConj}, are over any such field $k$, and our explicit computations culminating in Theorem \ref{thm: main: n=1} are also over any such $k$. The general statement requires only changes to condition (1) to mention an open subset of a more abstract model; the rest is a mechanical substitution. Nothing deep here.

    Nevertheless, I believe that not putting abstract $k$ into the statement helps with its digestion.
\end{remark}

\begin{proof} (Of Theorem \ref{thm: main theorem n>=2})

    \textbf{(3) implies (1).}

    This is the easy direction of the algebraic integrability conjecture, Proposition \ref{prop: easy direction AlgIntConj}.

    \textbf{(1) is equivalent to (2)}
    
    This follows from our work for $n=1$. 
    
    First, we observe that for each $i\ne j$ we have $[f_i(x_1)\frac{\partial}{\partial x_i},f_j(x_j)\frac{\partial}{\partial x_j}]=0$. Therefore, by Proposition \ref{prop: identities for p-lie} and Proposition \ref{prop: computing p-powers on curves}, we have that
    \begin{align*}
    D^p&=\left(f_1(x_1)\frac{\partial}{\partial x_1}+f_2(x_2)\frac{\partial}{\partial x_2}+\ldots+f_n(x_n)\frac{\partial}{\partial x_n}\right)^p  \\
    &=\left(f_1(x_1)\frac{\partial}{\partial x_1}\right)^p+\left(f_2(x_2)\frac{\partial}{\partial x_2}\right)^p+\ldots+\left(f_n(x_n)\frac{\partial}{\partial x_n}\right)^p\\
    &=\mu_1f_1(x_1)\frac{\partial}{\partial x_1}+\mu_2f_2(x_2)\frac{\partial}{\partial x_2}+\ldots+\mu_nf_n(x_n)\frac{\partial}{\partial x_n},
    \end{align*}
    where $\mu_i\in \overline{\mathbb{F}_p}(x_i^p)$. Consequently, $D$ is $p$-closed if and only if $\mu_1=\mu_2=\ldots=\mu_n$. In particular, this forces all of them to belong to $\overline{\mathbb{F}_p}$. This means all $f_i$ are constant-type, Definition \ref{def: constant-type}.

    Therefore, by Theorem \ref{thm: main: n=1}, we learn that for each $i$ we have either $L_i\ne 0$ and $\mu_i\ne 0$ for almost all $p$, or $L_i=0$ and $\mu_i=0$ for almost all $i$.

    We cannot have that some $i$ have $\mu_i=0$ and some $\mu_i\ne 0$, thus all of them must be of the same type: 
    for all $i$ we have $L_i=0$, xor for all $i$ we have $L_i\ne 0$.

    The first case is precisely (2a) and the second is the first part of (2b). We will prove the rest of part (2b). We already know $\op{dim}V_i=1$. However, we know that $\mu_i=\mu_j$ for all $i,j$. Let $c_i$ be a nonzero residue of $1/f_i$ and $c_j$ for $1/f_j$, then, by Lemma \ref{lemma: n=1, p>0, eta meaning}, $1/c_i^{p-1}=\mu_i=\mu_j=1/c_j^{p-1}$, i.e., $\left(c_i/c_j\right)^{p}=c_i/c_j$ for almost all $q$, so by Theorem \ref{thm: kronecker ++}, we get $c_i/c_j\in \mathbb{Q}$. Consequently, the dimension of $\sum_{i=1}^n V_i$ is one.

    The same formulas taken the other way show that being $p$-closed is equivalent to (2a) or (2b) holding.

    \textbf{(2) implies (3)}
    
    We use our assumptions to produce explicit elements killed by $D$, and among them there is a transcendental basis.

    \begin{itemize}
        \item Let us assume (2a) is true. We have all $L_i=0$. So $\int \frac{dx_i}{f_i}$ does not include any logarithms. It is a rational function.
        Let $i\ne j$. Then we define
        \[
        F_{ij}\coloneqq \int \frac{dx_i}{f_i}-\int \frac{dx_j}{f_j} \in \overline{\mathbb{Q}}(x_i,x_j)\setminus \left(\overline{\mathbb{Q}}(x_i)\cup \overline{\mathbb{Q}}(x_j)\right).
        \]
        We have that $D(F_{ij})=0$, and the $ F_{1j} $ have transcendental degree $n-1$.

        In this case $W$ is the separable closure of $\overline{\mathbb{Q}}(F_{ij})$.
        \item Let us assume (2b) is true. We have $ P_i=H_i=0$ for all $ i$, so there are only logarithmic terms in $\int \frac{dx_i}{f_i}=\sum_{j=1}^{d_i} c_{i,j}\op{log}(x-\alpha_{i,j})$. (We ignore ``$+ C$'' by choosing $C=0$.)

        Since $\op{dim}\sum_{i=1}^N V_i=1$, there exists $\eta \in \overline{\mathbb{Q}}$, $\eta \ne 0$ such that $\eta c_{i,j}\in {\mathbb{Q}}$ for all $i,j$.

        Let $N>0$ be an integer such that $N\eta c_{i,j}\in \mathbb{Z}$. The set of indices is finite, so we can do that.
        Let $i\ne j$. Then we define
        \[
        G_{ij} \coloneqq \op{exp}\left(N\eta \left(\frac{dx_i}{f_i}-\int \frac{dx_j}{f_j}\right)\right)\in \overline{\mathbb{Q}}(x_i,x_j)\setminus \left(\overline{\mathbb{Q}}(x_i)\cup \overline{\mathbb{Q}}(x_j)\right).
        \]
        These are rational functions such that $D(G_{ij})=0$ which define a subfield of transcendence degree $n-1$. In this case $W$ is the separable closure of $\overline{\mathbb{Q}}(G_{ij})$.
    \end{itemize}

    The theorem is proved.
\end{proof}

\begin{remark}
    I am not sure if the equalities $\overline{\mathbb{Q}}(G_{ij})=W$ or $\overline{\mathbb{Q}}(F_{ij})=W$ hold in general. 
\end{remark}

\subsection{For m-saturations}
The methods of this paper naturally suggest the following follow-up.

\begin{defin}
    Let $\F$ be a foliation over a finite extension $K$ of $\overline{\mathbb{Q}}(x_1,\ldots,x_n)$. We define a \textbf{m-saturation} of $\F$ to be a set 
    \[
    \overline{\F}\coloneqq \{D\in \op{Der}_{\overline{\mathbb{Q}}}(K): \text{$D$ belongs to $\F+\F^p+\F^{p^2}+\ldots$ modulo all $q$ over almost all $p$}\}.
    \]
\end{defin}

\begin{remark}
    The name ``m-saturation'' is a short form for ``modulo-almost-all-primes-saturation''. 

    It is a foliation. First, it is a vector subspace of derivations. Indeed, if $D, E$ belong to $p$-powers of $\F$ for almost all $p$, then their linear combinations over $K$ do the same. The same is true for Lie brackets, because being closed under $p$-powers implies being closed under Lie brackets - this is the main result of \cite{p-powers-give-lie-brackets}.
\end{remark}

The definition of m-saturation was formulated to capture the following example. We do not know if it is of any further use. To judge it properly, one would require a more diverse set of examples and some structural propositions about this notion.

\begin{example}[Computing a m-saturation.]\label{Ex: Computing a m-saturation}
Let $D=
f_1(x_1)\frac{\partial}{\partial x_1}+f_2(x_2)\frac{\partial}{\partial x_2}+\ldots+f_n(x_n)\frac{\partial}{\partial x_n}$ be a derivation on $\overline{\mathbb{Q}}(x_1,\ldots,x_n)$, where $f_1,f_2,\ldots,f_n \in \overline{\mathbb{Q}}(X)$, possibly some of them are zero.

We compute m-saturation for the foliation defined by $D$. 

We simply take $D,D^p,D^{p^2},\ldots$ and perform a linear algebra elimination.
This boils down to rearranging the elements $f_i$ into a few types.

\begin{itemize}
    \item We assume for $i\le m_1$ we have $f_i\ne 0$ and for $i>m_1$ we have $f_i=0$.

    For each $i=1,2,\ldots,m$, let $PFD(\frac{1}{f_i})=P_i+L_i+H_i$, see Theorem \ref{thm: PFD}, with $L_i=\sum_{j=1}^{d_i}\frac{c_{i,j}}{X-\alpha_j}$. We define $V_i=\op{Span}_\mathbb{Q}(c_{i,1},c_{i,2},\ldots,c_{i,d_i})\subset \overline{\mathbb{Q}}$: the vector space spanned by residues of $\frac{1}{f_i}$.
    \item We assume for $m_2\le m_1$, for $i\le m_2$ we have that $f_i$ is of constant-type, Definition \ref{def: constant-type}. And for $m_1\ge i>m_2$ it is not.
    \item We assume for $m_3\le m_2$, for $i\le m_3$ we have $P_i=H_i=0$, and for $m_2\ge i>m_3$ we have $L_i=0$.
    \item Finally, we group $i\le m_3$ into maximal subsets $S_1,S_2,\ldots,S_k\subset \{1,2,\ldots,m_3\}$ such that for each $j$: the dimension of $\sum_{i\in S_j} V_i$ is one. These subsets are disjoint, and their union equals $\{1,2,\ldots,m_3\}$.
\end{itemize}

We define the following elements:
\begin{itemize}
    \item For each $i\in\{m_2+1,\ldots,m_1\}$ we define $A_i=f_i(x_i)\frac{\partial}{\partial x_i}$ .
    \item $B=\sum_{i=m_3+1}^{m_2}f_i(x_i)\frac{\partial}{\partial x_i}$.
    \item For each $j\in \{1,\ldots,k\}$ we define $C_j=\sum_{i\in S_j}f_i(x_i)\frac{\partial}{\partial x_i}$.
\end{itemize}

We claim that the m-saturation $\overline{\F}$ of $\F$ is the foliation defined by $\{A_i,B,C_j\}$.

Indeed, this is because these elements commute with each other and their $p$-powers are of particular types:
\begin{itemize}
    \item $A_i^p = \mu_i A_i$, where $\mu_i$ is a nonconstant function in $x_i$.
    \item $B^p=0$.
    \item $C_j^p=\mu_j C_j$, where $\mu \in \overline{\mathbb{F}_p}$, but these $\mu_j$ for different $j$ are linearly independent.
\end{itemize}
This finishes the computation.
\end{example}

\begin{cor}[The actual answer to Problem \ref{problem1} for arbitrary $n\ge 2$]\label{cor: the actual answer}
    The foliation defined by $D$ from Equation \ref{eq: formula - definition} admits a non-constant rational function $F$ such that $D(F)=0$ if and only if the m-saturation $\overline{\F}$ of the foliation $\F$ defined by $D$ is of rank $\le n-1$, i.e., it is not everything.
\end{cor}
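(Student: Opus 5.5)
We will use the explicit description of $\overline{\F}$ from Example \ref{Ex: Computing a m-saturation}. With the notation there, $\overline{\F}$ is spanned by the $A_i$ for $m_2<i\le m_1$, by $B$, and by the $C_j$ for $1\le j\le k$. Hence
\[
\op{rank}\overline{\F}=(m_1-m_2)+\epsilon_B+k,
\]
where $\epsilon_B\in\{0,1\}$ is $1$ exactly when $m_2>m_3$. These generators involve pairwise disjoint sets of variables, so they are linearly independent over $K$ and the rank formula holds. The rank equals $n$ exactly when three things happen: every $f_i$ is nonzero ($m_1=n$); every non-constant-type $f_i$ gives its own generator; and among the constant-type indices there is at most one index with $L_i=0$, while each of the sets $S_j$ is a singleton. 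So $\op{rank}\overline{\F}\le n-1$ holds iff one of the following occurs:
\begin{enumerate}
\item some $f_i=0$;
\item two indices $i\ne j$ have $L_i=L_j=0$;
\item two indices $i\ne j$ have $P_i=H_i=P_j=H_j=0$, $\dim V_i=\dim V_j=1$ and $\dim(V_i+V_j)=1$.
\end{enumerate}

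\textbf{Rank $\le n-1$ implies a first integral.} We produce $F$ in each case.
\begin{itemize}
\item In case (1), take $F=x_i$.
\item In case (2), take $F_{ij}=\int\frac{dx_i}{f_i}-\int\frac{dx_j}{f_j}$ as in the proof of Theorem \ref{thm: main theorem n>=2}. It is rational because $L_i=L_j=0$, and it is killed by $D$ because it depends only on $x_i,x_j$.
\item In case (3), take $G_{ij}=\exp\bigl(N\eta(\int\frac{dx_i}{f_i}-\int\frac{dx_j}{f_j})\bigr)$, again as in that proof.
\end{itemize}
In each case $F$ is non-constant. A cleaner way to say this: we apply Theorem \ref{thm: main theorem n>=2} for $n=2$ to the derivation $f_i\partial_i+f_j\partial_j$ on $\overline{\mathbb{Q}}(x_i,x_j)$, and note that a first integral of it is killed by $D$.

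\textbf{A first integral implies rank $\le n-1$.} Suppose $D(F)=0$ with $F$ non-constant. Reducing modulo $q$ for almost all $p$, the reduction $\bar F$ is still non-constant and $d\bar F\ne0$ after choosing a good model, as in Proposition \ref{prop: easy direction AlgIntConj}. The reduction is killed by $D$, and since $D^{p^r}(\bar F)=D(D^{p^r-1}(\bar F))=0$ inductively, it is also killed by all $D^{p^r}$. Hence $\bar F$ is killed by the $K$-span of $D,D^p,D^{p^2},\ldots$. That span therefore has rank $<n$ modulo almost all $q$.

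The main obstacle is to pass from this span to $\overline{\F}$: we must show that every element of $\overline{\F}$ kills $\bar F$, so that $\overline{\F}$ has rank $\le n-1$. If $E\in\overline{\F}$, then modulo almost all $q$ the derivation $E$ lies in $\F+\F^p+\cdots$, which is contained in the annihilator of $\bar F$. So $\overline{\F}$ kills $\bar F$ modulo almost all primes, and $\op{rank}\overline{\F}=n$ would force $\partial_{x_i}$ to kill $\bar F$ for all $i$, i.e.\ $d\bar F=0$, contradicting our choice of model.

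The delicate point is making sure the generic rank is preserved under reduction, i.e.\ that $\F+\F^p+\cdots$ really equals the reduction of $\overline{\F}$ with the rank predicted by the Example. That equality rests on the Example's computation: the $\mu$'s of the $A_i$ are non-constant, $B^p=0$, and the $\mu_j$ of the $C_j$ are distinct. Distinctness of the $\mu_j$ comes from Kronecker Theorem ++ \ref{thm: kronecker ++}, as in Theorem \ref{thm: main theorem n>=2}. With these facts a Vandermonde-type elimination on $D^{p^r}$, $r=0,1,\dots$, recovers each generator modulo almost all $q$.

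Alternatively, the second direction can bypass reductions altogether and use Lemma \ref{lem: eventually alg is alg: rank 1 for dim 2}-style arguments. If $\op{rank}\overline{\F}=n$, then one shows directly via PFD uniqueness that any $F$ with $D(F)=0$ is constant. I would try the reduction argument first.
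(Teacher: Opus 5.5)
You follow the paper's proof essentially step for step. For ``$F$ exists $\Rightarrow$ rank $\le n-1$'' you reduce $F$ and note that $\F+\F^p+\cdots$ kills $\bar F$. For the converse you read the rank off Example~\ref{Ex: Computing a m-saturation} and exhibit $x_i$, $F_{ij}$ or $G_{ij}$.

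Your first direction is correct as written, and it does not need the Example: applying the definition of $\overline{\F}$ to $\partial_{x_1},\ldots,\partial_{x_n}$ suffices. Your witnesses for cases (1)--(3) are also fine. The Example is used only for the lower bound $\op{rank}\overline{\F}\ge (m_1-m_2)+\epsilon_B+k$, i.e.\ for ``rank $\le n-1\Rightarrow$ (1), (2) or (3)''.

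That lower bound is a genuine gap, and the paper's Example shares it: it asserts the needed separation of the $\mu_j$ without proof. Kronecker Theorem ++ (Theorem~\ref{thm: kronecker ++}) does not give you distinctness of the $\mu_j$ modulo almost all $q$. Its contrapositive only says that if $c/c'\notin\mathbb{Q}$, then $(c/c')^p\ne c/c'$ modulo \emph{infinitely many} $q$.

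Concretely, take $n=2$ and $D=x\frac{\partial}{\partial x}+\sqrt2\,y\frac{\partial}{\partial y}$. At every $q$ over a prime $p\equiv\pm1\pmod 8$ we have $\sqrt2\bmod q\in\mathbb{F}_p$. Hence $D^p=x\frac{\partial}{\partial x}+(\sqrt2)^p y\frac{\partial}{\partial y}=D$, so $\F+\F^p+\cdots$ is just the line spanned by $D$ there. Therefore $x\frac{\partial}{\partial x}\notin\overline{\F}$, and $\overline{\F}=KD$ has rank $1=n-1$. Your formula predicts $2$, and $D$ has no non-constant rational first integral (Example~\ref{ex: folclor example}). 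So with $\overline{\F}$ defined as in the paper, the ``if'' direction of the statement itself fails.

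The same problem affects your claim that the $\mu$ of each $A_i$ is non-constant. If $P_i=H_i=0$ but $\dim V_i\ge 2$, e.g.\ $\frac{1}{f_i}=\frac{1}{x}+\frac{\sqrt2}{x-1}$, then $\mu_i=\overline{L_i}/L_i^p=1$ at every split prime.

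Even at primes where the $\mu_j$ are pairwise distinct, your ``Vandermonde-type elimination'' does not isolate the $C_j$. Write $\mu_j=\eta_j^{p-1}$ with $\eta_j$ the reciprocal of a residue, so $C_j^{p^r}=\eta_j^{p^r-1}C_j$. If $D=\sum_j C_j$, then $D^{p^r}=\sum_j\eta_j^{p^r}(\eta_j^{-1}C_j)$. The span of $D,D^p,D^{p^2},\ldots$ is therefore governed by the Moore matrix $(\eta_j^{p^r})$. Its rank is $\dim_{\mathbb{F}_p}\op{Span}_{\mathbb{F}_p}(\eta_j\bmod q)$, not the number of distinct $\mu_j$.

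For example, take $D=x\frac{\partial}{\partial x}+\sqrt2\,y\frac{\partial}{\partial y}+(1+\sqrt2)z\frac{\partial}{\partial z}$. The three lines $V_i$ are distinct, so none of your cases (1)--(3) occurs and your formula gives rank $3$. But $\eta_3=\eta_1+\eta_2$, so every $D^{p^r}$ lies in the span of $x\frac{\partial}{\partial x}+z\frac{\partial}{\partial z}$ and $y\frac{\partial}{\partial y}+z\frac{\partial}{\partial z}$. Indeed $xy/z$ is a first integral.

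In the purely logarithmic case, existence of $F$ is governed by $\mathbb{Q}$-linear dependence of the $\eta_j$, which $\mathbb{F}_p$-ranks modulo $q$ do not detect. For $(\eta_j)=(1,\sqrt2,\sqrt3,\sqrt6)$ all residue degrees are at most $2$, so $\F+\F^p+\cdots$ has rank $\le 2<4$ at every $q$. Yet $\sum_{j=1}^{4}\eta_jx_j\frac{\partial}{\partial x_j}$ has no non-constant rational first integral. So this is not a matter of choosing the model more carefully: the Example's description of $\overline{\F}$ is wrong, and the equivalence needs a different invariant.
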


\begin{proof}
    If $D$ admits $F$, then the rank of $\overline{\F}$ modulo $p$ is $\le n-1$ for almost all $p$, so it is not $n$.

    If the rank of $\overline{\F}$ is $\le n-1$, then we can check that some $F$ exists by cases, because we have an explicit form for this m-saturation, Example \ref{Ex: Computing a m-saturation}.

    If $m_1<n$, then $F=x_n$ works.

    If $m_1=n$, then the rank is $\le n-1$ if and only if $m_2-m_3\ge 2$, or there is $j$ such that $|S_j|\ge 2$.
    
    So, if $m_2-m_3\ge 2$, then $F=\int \frac{dx_{m_2}}{f_{m_2}}-\int \frac{dx_{m_2-1}}{f_{m_2-1}}$ works.

    And, if $|S_j|\ge 2$, then $F=\op{exp}\left(N\eta(\int \frac{dx_{a}}{f_{a}}-\int \frac{dx_{b}}{f_{b}})\right)$ for $a,b\in S_j,a\ne b$, an algebraic number $\eta \ne 0$ and an integer $N\ne 0$.
\end{proof}

\section{Explicit Algebraic Integrability over Rational Numbers}
The main theorem \ref{thm: main theorem n>=2} is wonderful, but how hard is it to actually use it? In this section, we show that the cases $n=2$ and $n\ge 2$ are equivalent, and how to obtain the explicit formulas for $n=2$. All of this boils down to a careful study of vector spaces and fields spanned by residues of rational functions in the case of $P=H=0$.

\begin{lemma}[The case $n=2$ is the case $n\ge 2$]
    Let $k$ be a field of characteristic zero.
    Let $D$ be a derivation of the form $D= \sum_{i=1}^n f_i(x_i)\frac{\partial}{\partial x_i}$ where $f_i\in k(X)$ are nonzero rational functions.

    Then, $D$ defines an algebraically integrable foliation on $k(x_1,x_2,\ldots,x_n,\ldots)$ if and only if 
    for each $1\le i<j\le n$ the derivation $E_{ij}=f_i(x_i)\frac{\partial}{\partial x_i}+f_j(x_j)\frac{\partial}{\partial x_j}$ defines an algebraically integrable foliation on $k(x_i,x_j)$.
\end{lemma}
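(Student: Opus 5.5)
The plan is to reduce both sides to the combinatorial condition (2) of Theorem \ref{thm: main theorem n>=2}, applied once to $D$ (with $n$ variables) and once to each $E_{ij}$ (with two variables). By the remark after that theorem, its equivalence (2)$\Leftrightarrow$(3) holds over any field $k$ of characteristic zero. So $D$ is algebraically integrable if and only if either all $L_i=0$, or all $P_i=H_i=0$ with $\dim(V_1+\ldots+V_n)=1$. Likewise, $E_{ij}$ is algebraically integrable on $k(x_i,x_j)$ if and only if either $L_i=L_j=0$, or $P_i=H_i=P_j=H_j=0$ with $\dim(V_i+V_j)=1$.

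The direction ``$\Rightarrow$'' is then immediate. If all $L_i=0$, then in particular $L_i=L_j=0$ for every pair. If instead all $P_i=H_i=0$ and $\dim\sum V_i=1$, then each $V_i\neq 0$ (since $1/f_i\neq 0$ forces $L_i\neq0$), and $V_i+V_j\subset\sum V_l$ has dimension exactly $1$. Alternatively, one can argue directly: the first integrals $F_{ij}$ or $G_{ij}$ are killed by $E_{ij}$.

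For ``$\Leftarrow$'', each pair $(i,j)$ falls into type (a) or type (b). Note that type (a) for a pair means $L_i=L_j=0$, while type (b) forces $L_i\neq 0$, because $P_i=H_i=0$ and $1/f_i\neq 0$. The two types are therefore incompatible on any shared index. Fix the index $1$. If $L_1=0$, every pair $(1,j)$ must be of type (a), so all $L_j=0$ and $D$ satisfies (2a). If $L_1\neq 0$, every pair $(1,j)$ is of type (b), so $P_j=H_j=0$ for all $j$ and $\dim(V_1+V_j)=1$. Since each $V_j$ is a nonzero one-dimensional space inside the line $V_1+V_j$, we get $V_j=V_1$ for all $j$. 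Hence $\sum V_j=V_1$ is one-dimensional, which is (2b).

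The only step needing care, and the main obstacle, is justifying that Theorem \ref{thm: main theorem n>=2} (2)$\Leftrightarrow$(3) is available over an arbitrary $k$ of characteristic zero rather than just $\overline{\mathbb{Q}}$. I would handle this by passing to $\overline{k}$: algebraic integrability descends by Lemma \ref{lem: eventually alg is alg: rank 1 for dim 2} and ascends trivially. I would then invoke the Remark that the whole argument (Kronecker ++ and Proposition \ref{prop: easy direction AlgIntConj}) works over any such field. The residues and PFD are computed over $\overline{k}$ in any case.
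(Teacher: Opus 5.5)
Your proposal is correct and follows the same route as the paper: both sides are translated into condition (2) of Theorem \ref{thm: main theorem n>=2} (over a general $k$ via the remark following it), and one checks that ``all $L_i=0$'' or ``all $P_i=H_i=0$ with $\dim\sum_i V_i=1$'' holds for $D$ exactly when the corresponding condition holds for every pair $(i,j)$. The paper's proof only asserts that the types must be compatible and that $\dim\sum_i V_i=1 \iff \dim(V_i+V_j)=1$ for all $i,j$; your argument, which fixes the index $1$ and uses that type (b) forces $L_i\neq 0$, supplies exactly those omitted details.
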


\begin{proof}
    We use the main theorem \ref{thm: main theorem n>=2}, the conditions (2) and (3).
    
    First, a necessary condition is that all $f_i$ are constant-type, Definition \ref{def: constant-type}, and that they are compatible: all $1/f_i$ are $P+H$, or all are $L$ and extra. 
    
    In the first case, we have an exact equivalence between conditions for $D$ and $E_{ij}$.

    In the second case, we need to observe that $V_1+\ldots+V_n$ has dimension $1$ if and only if, for each $i,j$, $V_i+V_j$ has dimension $1$.
\end{proof}

\subsection{Obtaining Explicit Formulas} We start with some trivia on residues.

\begin{lemma}[Computing Residues]\label{lem: residues via lhopital}
    Let $k$ be a field of characteristic zero.
    Let $f\in k[X]$ be a separable polynomial. Let $g\in k[X]$ be a polynomial. Let $\alpha\in \overline{k}$ be such that $f(\alpha)=0$ and $g(\alpha)\ne0$; then the residue of $g/f$ at $\alpha$ is equal to $g(\alpha)/f'(\alpha)$.
\end{lemma}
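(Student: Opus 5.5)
The plan is to prove the formula directly from the partial fraction decomposition of Theorem \ref{thm: PFD} over $\overline{k}$. Since $f$ is separable, all its roots in $\overline{k}$ are simple, so we can write $f=(X-\alpha)h$ with $h\in\overline{k}[X]$ and $h(\alpha)\ne 0$. Then $g/f=\frac{g/h}{X-\alpha}$, and $g/h$ is regular at $\alpha$ with value $g(\alpha)/h(\alpha)$. This value is nonzero, although nonvanishing is not even needed.

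First, I will show that the only term of the PFD of $g/f$ with a pole at $\alpha$ is $\frac{s}{X-\alpha}$, with $s=g(\alpha)/h(\alpha)$. To see this, write
\[
\frac{g}{h}=\frac{g(\alpha)}{h(\alpha)}+(X-\alpha)\,r,
\]
where $r\in\overline{k}(X)$ is regular at $\alpha$. This is possible because $\frac{g}{h}-\frac{g(\alpha)}{h(\alpha)}$ vanishes at $\alpha$, so its numerator is divisible by $X-\alpha$. Dividing by $X-\alpha$ gives
\[
\frac{g}{f}=\frac{s}{X-\alpha}+r.
\]
Now expand $r$ by its own PFD. It has no terms in $X-\alpha$, because it has no pole at $\alpha$. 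By the uniqueness in Theorem \ref{thm: PFD}, the coefficient of $\frac{1}{X-\alpha}$ in $PFD(g/f)$ is therefore exactly $s$, and there are no higher terms at $\alpha$.

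Second, I will identify $h(\alpha)$ with $f'(\alpha)$. Differentiating $f=(X-\alpha)h$ gives $f'=h+(X-\alpha)h'$, so $f'(\alpha)=h(\alpha)$. Hence the residue is $g(\alpha)/f'(\alpha)$. This is the algebraic l'H\^opital rule.

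The only point needing care is the uniqueness step: the PFD is taken with denominators $X-\alpha_i$ over $\overline{k}$, and $r$ must contribute nothing at $\alpha$. This holds because $r=\frac{a}{b}$ with $b(\alpha)\ne0$, so the factor $X-\alpha$ never appears among its denominators. I do not expect any real obstacle here.
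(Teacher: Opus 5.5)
Your proof is correct. It takes a different route from the paper's, in fact the purely algebraic computation that the paper mentions but does not carry out.

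The paper argues analytically. Assuming $k\subset\mathbb{C}$, it writes the residue as $\lim_{x\to\alpha}(x-\alpha)g(x)/f(x)$ and applies l'H\^opital's rule.

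You instead:
\begin{enumerate}
\item factor $f=(X-\alpha)h$ over $\overline{k}$;
\item split $g/f=\frac{s}{X-\alpha}+r$ with $r$ regular at $\alpha$;
\item identify $s$ as the residue by uniqueness of the partial fraction decomposition (Theorem \ref{thm: PFD});
\item replace l'H\^opital by the product-rule identity $f'(\alpha)=h(\alpha)$.
\end{enumerate}

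What each approach buys:
\begin{itemize}
\item The paper's version is shorter and more intuitive.
\item As written, though, it needs an embedding into $\mathbb{C}$. For a general characteristic-zero $k$, this means a Lefschetz-principle step on the finitely generated field containing the coefficients of $f,g$ and $\alpha$.
\item It also tacitly assumes that the limit picks out the PFD coefficient $s_{i1}$. That holds because the pole is simple and all other PFD terms are regular at $\alpha$, which is exactly what your decomposition $g/f=\frac{s}{X-\alpha}+r$ makes explicit.
\item Your argument works directly with the paper's PFD-based definition of residue.
\item It never uses characteristic zero, only that $\alpha$ is a simple root, i.e.\ $f'(\alpha)\neq 0$.
\item As you observe, it does not need $g(\alpha)\neq 0$, under the convention that the residue at a non-pole is $0$.
\end{itemize}
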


\begin{proof}
Let $c$ be the residue; by elementary calculus, if $k\subset\mathbb{C}$:
\[
c=\lim_{x\to\alpha} \frac{(x-\alpha)g(x)}{f(x)}=\lim_{x\to\alpha} \frac{g(x)+(x-a)g'(x)}{f'(x)}=\frac{g(\alpha)}{f'(\alpha)}.
\]
This can also be obtained purely algebraically by a direct computation over any field.
\end{proof}

\begin{lemma}[Lagrange Approximation]\label{lem: Lagrange Approximation}
    Let $\alpha_1,\ldots,\alpha_d$ be distinct elements of a field $k$. Let $f(x)=c\prod_{i=1}^d(x-\alpha_i)$, where $c\in k$, $c\ne 0$. Let $g\in k[x]$ be a polynomial of degree $<d$, then
    \[
    g(x)=\sum_{i=1}^d g(\alpha_i)\frac{f(x)}{(x-\alpha_i)f'(\alpha_i)}.
    \]
    In particular, we have that 
    $
    [x^{d-1}]g(x)=c\sum_{i=1}^d \frac{g(\alpha_i)}{f'(\alpha_i)}
    $ holds. 
\end{lemma}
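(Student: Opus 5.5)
The interpolation formula follows from the standard uniqueness argument for polynomials of degree $<d$. Writing $f(x)=c\prod_{j}(x-\alpha_j)$, the polynomial $\ell_i(x)\coloneqq \frac{f(x)}{x-\alpha_i}=c\prod_{j\ne i}(x-\alpha_j)$ is a genuine polynomial of degree $d-1$. I compute $f'(\alpha_i)$ by the product rule: every term of $f'$ except the one where $(x-\alpha_i)$ is differentiated still contains the factor $(x-\alpha_i)$. Hence $f'(\alpha_i)=c\prod_{j\ne i}(\alpha_i-\alpha_j)=\ell_i(\alpha_i)$, which is nonzero because the $\alpha_j$ are distinct and $c\ne 0$. (This is the same computation as Lemma \ref{lem: residues via lhopital}, done algebraically.) Also $\ell_i(\alpha_k)=0$ for $k\ne i$.

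Define $h(x)\coloneqq \sum_{i=1}^d g(\alpha_i)\frac{\ell_i(x)}{f'(\alpha_i)}$. It is a polynomial of degree $\le d-1$. By the above, $h(\alpha_k)=g(\alpha_k)\frac{\ell_k(\alpha_k)}{f'(\alpha_k)}=g(\alpha_k)$ for each $k$. So $g-h$ has degree $<d$ and vanishes at the $d$ distinct points $\alpha_1,\ldots,\alpha_d$. A nonzero polynomial over a field has at most as many roots as its degree, so $g=h$. This gives the displayed identity.

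For the ``in particular'' part, I compare coefficients of $x^{d-1}$ on both sides. Each $\ell_i$ has leading coefficient $c$ in degree $d-1$, so $[x^{d-1}]\ell_i=c$. By linearity, $[x^{d-1}]g=\sum_i \frac{g(\alpha_i)}{f'(\alpha_i)}\cdot c$, as claimed.

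There is no real obstacle here; the only point needing care is the evaluation $f'(\alpha_i)=\ell_i(\alpha_i)$, which uses distinctness of the roots. The argument works over any field, and no characteristic assumption is needed.
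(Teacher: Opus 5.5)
Your proof is correct and follows the same route as the paper, which simply invokes the uniqueness of a polynomial of degree $<d$ with prescribed values at $d$ distinct points (citing the Vandermonde matrix); your root-counting argument is that same uniqueness fact. You also spell out the details the paper leaves implicit, namely the node evaluation $f'(\alpha_i)=c\prod_{j\ne i}(\alpha_i-\alpha_j)\neq 0$ and the leading-coefficient comparison $[x^{d-1}]\ell_i=c$.
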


\begin{proof}
    Any polynomial of degree $d-1$ is determined by its values at $d$ different arguments, because of the Vandermonde matrix.
\end{proof}

We go towards the explicit formulas. We start with a lemma-corollary.

\begin{cor}\label{cor: degree no more than 2}
    Let $k$ be a field of characteristic zero.
    Let $f,g\in k(X)$ be nonzero rational functions such that $PFD(1/f)=L(1/f)$ and $PFD(1/g)=L(1/g)$. If the foliation defined by
    $D= f(x)\frac{\partial}{\partial x} +g(y)\frac{\partial}{\partial y}$ is algebraically integrable, then the field $L$ generated by residues of $1/f,1/g$ over rational numbers satisfies $(L:k)\le 2$.
\end{cor}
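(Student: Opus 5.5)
The plan is to reduce everything to a Galois-theoretic statement about a single residue, using Theorem \ref{thm: main theorem n>=2} as the input. I read the statement as saying that $L$ is the field generated over $k$ by the residues, and the claim is $(L:k)\le 2$.

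First, I invoke Theorem \ref{thm: main theorem n>=2} for $n=2$, direction (3) $\Rightarrow$ (2), in the generality of an arbitrary field $k$ of characteristic zero, which the remark following the theorem allows. Algebraic integrability forces either $L_1=L_2=0$ or case (2b). Since $PFD(1/f)=L(1/f)$ and $f\neq 0$, the logarithmic part is nonzero, so we are in (2b). Hence $V_1+V_2$ is one-dimensional over $\mathbb{Q}$. Fixing one nonzero residue $c_0$, every residue of $1/f$ and of $1/g$ then has the form $r c_0$ with $r\in\mathbb{Q}$. In particular $L=k(c_0)$, so it suffices to show $(k(c_0):k)\le 2$.

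Next, I use that $1/f,1/g\in k(X)$ have coefficients in $k$. Take $\sigma\in\op{Aut}(\overline{k}/k)$. Applying $\sigma$ to the partial fraction decomposition over $\overline{k}$ and using uniqueness (Theorem \ref{thm: PFD}), $\sigma$ permutes the poles, and the residue at $\sigma(\alpha)$ equals $\sigma$ of the residue at $\alpha$. (Equivalently, by Lemma \ref{lem: residues via lhopital}, the residue at a simple root $\alpha$ of the denominator is $g(\alpha)/f'(\alpha)$ with $f,g$ over $k$.) Therefore $\sigma(c_0)$ is again a nonzero residue of $1/f$ or $1/g$, so $\sigma(c_0)=r_\sigma c_0$ with $r_\sigma\in\mathbb{Q}^\times$.

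Then I check that $\sigma\mapsto r_\sigma$ is a group homomorphism to $\mathbb{Q}^\times$. Indeed, $\sigma\tau(c_0)=\sigma(r_\tau c_0)=r_\tau r_\sigma c_0$, using that $\sigma$ fixes $\mathbb{Q}$. The image is finite because $c_0$ is algebraic over $k$ and so has finitely many conjugates. The only finite subgroups of $\mathbb{Q}^\times$ are $\{1\}$ and $\{\pm1\}$. The kernel is exactly the stabilizer of $c_0$, so the stabilizer has index at most $2$. By Galois theory this gives $(k(c_0):k)\le 2$, since $c_0$ lies in a separable extension because the characteristic is zero.

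The main obstacle is the first step: making sure Theorem \ref{thm: main theorem n>=2} genuinely applies over an arbitrary $k$ of characteristic zero rather than only $\overline{\mathbb{Q}}$. If needed, I would instead descend to a finitely generated subfield containing all coefficients and apply Theorem \ref{thm: kronecker ++} directly to the ratios $c/c_0$. The Galois step itself is routine; the only point needing care is the compatibility of residues with $\sigma$, which follows from the uniqueness of the partial fraction decomposition.
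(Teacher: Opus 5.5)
Your proposal is correct and follows essentially the same route as the paper. Theorem \ref{thm: main theorem n>=2} gives that all residues span a one-dimensional $\mathbb{Q}$-space; a Galois conjugate of a residue is then a rational multiple of it, that multiple is a rational root of unity, hence $\pm 1$, and so the degree is at most $2$. The only difference is packaging: the paper works one irreducible factor at a time and iterates a single $\sigma$ of order $e$ to get $c_i^e=1$, whereas you use a homomorphism $\sigma\mapsto r_\sigma$ from $\op{Aut}(\overline{k}/k)$ to $\mathbb{Q}^\times$ with finite image, which is slightly cleaner and handles all residues at once.
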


\begin{proof}
    This is {\cite[a comment at page 6 that is to follow from Proposition 7 therein]{rational_first_for_separated_2025}}.
    
    We reprove it using elementary Galois theory; see, e.g., \cite[Chapter 8]{Jacobson-BasicAlgebraII}. Our proof differs from the one in \cite{rational_first_for_separated_2025}, or the original source they cite, without indicating where it is stated and proved, which I could not identify. Their statements are also unclear to me; they are quite technical without adding much.

    By the main theorem \ref{thm: main theorem n>=2}, the conditions (2) and (3), $D$ defines an algebraically integrable foliation if and only if the spaces $V_f, V_g$ spanned by residues of $1/f,1/g$ are of dimension $1$ each, and $V_f+V_g$ is of dimension $1$ too. Therefore, the field generated by residues of $1/f$ and $1/g$ is the same one as the one generated by the residues of $1/f$ alone. The same is true for $1/g$. So, we can focus on $V_f$ alone.

    We have that $PFD(1/f)=L(1/f)$, so
    $
    1/f= \sum_{i=1}^r \frac{h_i(x)}{f_i(x)},
    $
    where $f_i$ are irreducible rational polynomials coprime to each other, and $h_i\ne 0$ are polynomials with $\op{deg}(h_i)<\op{deg}(f_i)$.
    Therefore, $V_f$ is spanned by residues of $\frac{h_i(x)}{f_i(x)}$ for any $i$.

    We fix $i$ and rename $h=h_i, f=f_i$. (We have a notational overload, but it does not matter.)
    
    Let $\alpha_i$ for $i=1,2,\ldots,d$ be poles of $h/f$. And $\gamma_i$ residues at them respectively, so  we have $\gamma_i=\frac{h(\alpha_i)}{f'(\alpha_i)}$ by Lemma \ref{lem: residues via lhopital}.
    By our assumptions, $V_f=\op{span}_\mathbb{Q}(\gamma_1,\ldots,\gamma_d)$ has dimension $1$. And each of $\gamma_i$ is nonzero. So, there are nonzero rational numbers $c_i \in \mathbb{Q}$ such that $\gamma_i=c_i\gamma_1$.
    
    Let $G$ be the Galois group of the splitting field of $f$. $G$ acts transitively on $\alpha_i$, and thus it acts transitively on $\gamma_i$.
    Indeed, if $\sigma\in G$ does $\sigma(\alpha_1)=\alpha_i$, then
    $\sigma(\gamma_1)=\gamma_i$, because $\gamma_i=\frac{h(\alpha_i)}{f'(\alpha_i)}$, and $h,f$ are rational polynomials over $k$.

    Let $e$ be the order of $\sigma$, then
    \begin{equation}\label{eq: argument from order at most 2}
        \gamma_1=\sigma^e (\gamma_1)=\sigma^{e-1}(c_i\gamma_1)=c_i\sigma^{e-1}(\gamma_1)=\ldots=c_i^e\gamma_1.
    \end{equation}

    Consequently, $c_i^e=1$, so $c_i=\pm 1$, because these are the only rational roots of unity. 
    
    If all $c_i$ are $1$, then $\gamma_1$ is fixed by $G$, and thus rational, i.e., the field generated by all $\gamma_i$ is $k$.

    If some $c_i$ are $-1$, then $\gamma_1,\gamma_i=-\gamma_1$ is a Galois orbit of $\gamma_1$, and thus $k(\gamma_1,\gamma_2,\ldots,\gamma_d)=k(\gamma_1,-\gamma_1)=k(\gamma_1)$ is of degree $2$.

    This finishes the proof.
\end{proof}

\begin{problem}[Algebraic Number Theory meets Vector Fields]
    For arbitrary derivation $D$ on a field over a number field with finitely many variables. Are there more complex restrictions similar to Corollary \ref{cor: degree no more than 2} that we could exploit towards the general algebraic integrability conjecture \ref{conj: alg int conj}?
\end{problem}

    Corollary \ref{cor: degree no more than 2} inspires us to study the following question:
    Which rational functions $f\in\mathbb{Q}(x)$ satisfying $PFD(1/f)=L(1/f)$
    have a field generated by residues of $1/f$ of degree $\le 2$?
    The surprise is that this can be answered completely.

\begin{lemma}[Low Orders of Residue Subfields imply Explicit Numerators]\label{lem: explicit h for residues field of orders 1,2}
    Let $k$ be a field. Let $f\in k[x]$ be a separable irreducible polynomial over $k$ of degree $d$ with roots $a_1,\ldots,a_d$.  Let $h\in k[x]$ be a polynomial of degree $<d$.
    Then the field $L\coloneqq k\left(\frac{h(a_1)}{f'(a_1)},\frac{h(a_2)}{f'(a_2)} ,\frac{h(a_3)}{f'(a_3)}, \ldots, \frac{h(a_d)}{f'(a_d))}\right)$
    satisfies $(L:k) \le 2$ if and only if:
    \begin{enumerate}
        \item $(L:k) =1$, and $h=cf'$ for some $c\in k$.
        \item $(L:k) = 2$, and $f=g\cdot \overline{g}$ factorizes over $L$ into two conjugate factors, 
        \\and there is $c\in L\setminus k$ such that $h=cg' \overline{g}+ \overline{c}g\overline{g}'$.
    \end{enumerate}
\end{lemma}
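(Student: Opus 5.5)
The plan is to work entirely with the residues $\gamma_i\coloneqq \frac{h(a_i)}{f'(a_i)}$, using Lemma \ref{lem: residues via lhopital} to interpret them, and with the Galois group $G$ of the splitting field $E$ of $f$ over $k$. Throughout, the uniqueness tool is the one in Lemma \ref{lem: Lagrange Approximation}: a polynomial of degree $<d$ is determined by its values at the $d$ distinct roots $a_1,\ldots,a_d$. The converse implications in both cases are direct evaluations, so the real content is the forward direction. As in the proof of Corollary \ref{cor: degree no more than 2}, since $f,h\in k[x]$ and $G$ acts transitively on the $a_i$ (because $f$ is irreducible), every $\gamma_i$ is a $G$-conjugate of $\gamma_1$. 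In particular $L$ is a Galois-stable subfield of $E$.

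Case $(L:k)=1$. All $\gamma_i$ lie in $k$, and they are all conjugate to $\gamma_1$, so $\gamma_i=\gamma_1\eqqcolon c$ for every $i$. Then $h-cf'$ has degree $<d$ and vanishes at the $d$ distinct roots $a_i$, hence $h=cf'$. Conversely, if $h=cf'$ then every $\gamma_i=c\in k$, so $L=k$.

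Case $(L:k)=2$. Some $\gamma_1=\gamma\notin k$, and its conjugates over $k$ are exactly $\gamma$ and $\overline{\gamma}$, so each $\gamma_i\in\{\gamma,\overline{\gamma}\}$. I would partition the roots into $I=\{i:\gamma_i=\gamma\}$ and $\overline{I}=\{i:\gamma_i=\overline{\gamma}\}$, and put
\[
g\coloneqq \operatorname{lc}(f)\prod_{i\in I}(x-a_i).
\]
The set $\{a_i\}_{i\in I}$ is stable under $\operatorname{Gal}(E/L)$, since that group fixes $\gamma$. Hence $g\in L[x]$. An element of $G$ acting nontrivially on $L$ swaps $I$ and $\overline{I}$, so $\overline{g}=\operatorname{lc}(f)^{\sigma}\prod_{i\in\overline{I}}(x-a_i)$. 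After normalising the leading coefficients (put $\operatorname{lc}(f)$ into one factor, or assume $f$ monic), this gives $f=g\overline{g}$.

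Now set $c\coloneqq\gamma\in L\setminus k$ and $\tilde h\coloneqq cg'\overline{g}+\overline{c}g\overline{g}'$. This polynomial is fixed by conjugation, so $\tilde h\in k[x]$, and $\deg\tilde h<d$. At a root $a$ of $g$ we have $f'(a)=g'(a)\overline{g}(a)$ and $\tilde h(a)=c\,g'(a)\overline{g}(a)$, so $\tilde h(a)/f'(a)=c=h(a)/f'(a)$. Symmetrically, at roots of $\overline{g}$ both ratios equal $\overline{c}$. Separability of $f$ guarantees that $g$ and $\overline{g}$ are coprime and that $f'(a)\neq0$. Therefore $\tilde h$ and $h$ agree at all $d$ roots and must coincide. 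For the converse, the same evaluation shows that the residues are exactly $c$ and $\overline{c}$. Hence $L=k(c)$, which has degree $2$ because $c\notin k$ and $c$ lies in the quadratic field over which $f$ splits as $g\overline{g}$.

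The step needing the most care is the descent of $g$ to $L[x]$ and the identification of $\overline{g}$ as the complementary factor. Both rely on the Galois correspondence inside $E$ and on the fact that the fibres of $i\mapsto\gamma_i$ are permuted by $G$. The bookkeeping of the leading coefficient, and the meaning of ``conjugate'' when $L/k$ is quadratic, also need to be made precise there. Everything else is the Lagrange uniqueness argument.
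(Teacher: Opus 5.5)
Your proposal is correct and follows essentially the same route as the paper. Both arguments use transitivity of the Galois group on the residues $\gamma_i$, then partition the roots by the value of $\gamma_i$ to build $g,\overline{g}\in L[x]$ via the Galois correspondence. Both finish with the Lagrange uniqueness argument: $h-(cg'\overline{g}+\overline{c}g\overline{g}')$ has degree $<d$ and vanishes at all $d$ roots, so it is zero.

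The leading-coefficient normalisation you flag corresponds to the paper's ``if $f$ is monic'' step. It is genuinely needed for the literal factorisation $f=g\overline{g}$: for $f=3(x^2+1)$ over $\mathbb{Q}$, the constant $3$ is not a norm from $\mathbb{Q}(i)$, so no such factorisation exists.
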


\begin{proof}
    Let $F$ be the splitting field of $f$, i.e., $F=k(a_1,\ldots,a_d)$. The extension $F/k$ is Galois, because every splitting field of a separable polynomial is Galois. Let $G$ denote $\op{Gal}(F/k)$.

    $G$ acts transitively on all $a_i$, and thus on all $\frac{h(a_i)}{f(a_i)}$. Indeed, if $\sigma\in G$ satisfies $\sigma(a_1)=a_i$, then $\sigma(\frac{h(a_1)}{f(a_1)})=\frac{h(a_i)}{f(a_i)}$, because $h,f$ are polynomials over $k$.

    We put $\gamma_i=\frac{h(a_1)}{f(a_1)}$.
    
    We assume $(L:k)\le 2$. Thus, the Galois orbit of $\gamma_1\in L$ is the set $A=\{\gamma_1,\ldots,\gamma_d\}$, but, since the order of this extension is $\le 2$, this set is of size $1$, or $2$.

    If $|A|=1$, then for all $i$ we have $\gamma_1=\gamma_i$, thus $\gamma_1\in k$. By Lagrange approximation, Lemma \ref{lem: Lagrange Approximation}, applied to $h$ and $f'$, we get
    \begin{align*}
     h(x)&=\sum_{i=1}^d h(a_i)\frac{f(x)}{(x-a_i)f'(a_i)}=\sum_{i=1}^d \gamma_i\frac{f(x)}{(x-a_i)}=\\
     &=\gamma_1\sum_{i=1}^d \frac{f(x)}{(x-a_i)}=\gamma_1\sum_{i=1}^d f'(a_i)\frac{f(x)}{(x-a_i)f'(a_i)}=\gamma_1f'(x).
    \end{align*}
    We can take $c=\gamma_1$.

    If $|A|=2$, then the set $\{a_1,\ldots,a_d\}$ splits into two sets $A_1=\{a_i: \gamma_i=\gamma_1\}$ and $A_2=\{a_i: \gamma_i\ne\gamma_1\}$. By transitivity, these sets are of equal size $d/2$, thus $2|d$ and $d\ge 2$. We put $d=2d'$. We denote the roots in $A_1$ by $\alpha_1,\ldots,\alpha_{d'}$ and the ones in $A_2$ by $\beta_1,\ldots,\beta_{d'}$. We assume, without any loss of generality, that $a_2\in A_2$. So, we will write $\gamma_2$ for the second value.
    
    We put $g(x)=\prod_{i=1}^{d'}(x-\alpha_i)$ and $\overline{g}(x)=\prod_{i=1}^{d'}(x-\beta_i)$, so $f(x)=g(x)\overline{g}(x)$, if $f$ is monic. We observe that the subgroup of $G$ fixing $L$ fixes $g$ and $\overline{g}$, so they are polynomials over $L$ that are conjugate to each other with respect to $G$.

    The polynomial $H(x)=\gamma_1 g'(x)\overline{g}(x)+\gamma_2 g(x)\overline{g}'(x)-h(x)$ is by the definitions of degree $<d$ and zero at all roots of $f$, thus $H=0$, i.e., we can put $c=\gamma_1$ and we are done.

    A simple direct check proves the other implication. 
    This finishes the proof.
\end{proof}

\begin{remark}[Higher Degrees?]
    I believe that one can produce explicit formulas for numerators for $(L:k)>2$ too. These will be more complicated, and we do not need them, so we do not.
\end{remark}

\begin{thm}\label{thm: explicit, n=2, both L, over rational number}
    This is a proof-pointer for Theorem \ref{thm: explicit main thm in intro}.
\end{thm}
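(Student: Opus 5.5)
The plan is to derive Theorem \ref{thm: explicit main thm in intro} from Theorem \ref{thm: main theorem n>=2} (with $n=2$), which is valid over any field of characteristic zero. Algebraic integrability for $n=2$ means exactly that a non-constant first integral exists. Lemma \ref{lem: eventually alg is alg: rank 1 for dim 2} lets us pass between $\mathbb{Q}(x,y)$ and $\overline{\mathbb{Q}}(x,y)$: a first integral over $\overline{\mathbb{Q}}$ already lives in a finite extension $L(x,y)$, and it descends to one in $\mathbb{Q}(x,y)$. Thus (1) is equivalent to condition (2a) or (2b) of Theorem \ref{thm: main theorem n>=2}, computed over $\overline{\mathbb{Q}}$. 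Case (2a) there, $L_1=L_2=0$, is literally ``all complex residues of $1/f,1/g$ vanish'', i.e.\ (2a) of the intro theorem. It remains to translate case (2b), namely $P_i=H_i=0$, $\dim V_f=\dim V_g=\dim(V_f+V_g)=1$, into (2b) or (2c).

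For the forward direction, assume that case holds. Write $1/f=\sum_i h_i/f_i$ over $\mathbb{Q}$ with $f_i$ irreducible, pairwise coprime, and $\deg h_i<\deg f_i$; here $f_i$ is separable since we are in characteristic zero. By Corollary \ref{cor: degree no more than 2} and its proof, the field generated by all residues has degree $\le 2$, and the residues $\gamma$ of each $h_i/f_i$ form a Galois orbit of size $1$ or $2$. Lemma \ref{lem: explicit h for residues field of orders 1,2} then gives one of two shapes. Either $h_i=c_if_i'$ with $c_i\in\mathbb{Q}$, or $h_i=c_ig_i'\overline{g_i}+\overline{c_i}g_i\overline{g_i}'$ with $c_i\in L\setminus\mathbb{Q}$. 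In the second case the orbit is $\{\gamma,-\gamma\}$, as shown in the proof of Corollary \ref{cor: degree no more than 2}. This gives $\overline{c_i}=-c_i$, and rewriting yields $h_i=c_i(f_{i1}'f_{i2}-f_{i2}'f_{i1})$.

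Next we show the two shapes cannot mix, and that one quadratic field $L$ serves both $f$ and $g$. Since $\dim(V_f+V_g)=1$, all residues of $1/f$ and $1/g$ are rational multiples of a single $\gamma$. If $\gamma\in\mathbb{Q}$, every factor is of the first shape, which gives (2b). If $\gamma\notin\mathbb{Q}$, every residue is irrational and lies in $L=\mathbb{Q}(\gamma)$, which is quadratic by Corollary \ref{cor: degree no more than 2}. So every factor is of the second shape over this common $L$, which gives (2c).

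For the converse, compute residues with Lemma \ref{lem: residues via lhopital}. In (2b), $c_if_i'/f_i$ has residue $c_i\in\mathbb{Q}$ at every root. In (2c), at a root of $f_{i1}$ the residue is $c_i$, and at a root of $f_{i2}$ it is $-c_i=\overline{c_i}$. Summing over the two factors, $c_i(f_{i1}'f_{i2}-f_{i2}'f_{i1})/f_i=c_i\bigl(f_{i1}'/f_{i1}-f_{i2}'/f_{i2}\bigr)$. So all residues lie in $\mathbb{Q}\cdot c$ for $c$ with $c^2\in\mathbb{Q}$, because each $c_i$ is a rational multiple of $\sqrt{\Delta}$ when $L=\mathbb{Q}(\sqrt{\Delta})$. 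Also $P=H=0$, since the poles are simple and the expression has negative degree. Hence case (2b) of Theorem \ref{thm: main theorem n>=2} holds.

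The main obstacle I expect is the bookkeeping in the forward direction. We must confirm that the relative signs of the residues within one orbit are exactly $\pm$, so that $\overline{c}=-c$, and that a single $L$ works for all $i$ and for both $f$ and $g$. Both points rest on the orbit-size argument of Corollary \ref{cor: degree no more than 2} combined with $\dim(V_f+V_g)=1$.
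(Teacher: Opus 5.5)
Your proposal is correct and follows the paper's own route. You apply Theorem \ref{thm: main theorem n>=2} with $n=2$, bound the residue field by Corollary \ref{cor: degree no more than 2}, and read off the numerators from Lemma \ref{lem: explicit h for residues field of orders 1,2}. You then use $\dim(V_f+V_g)=1$ to force $c+\overline{c}=0$ and a single common quadratic field, and prove the converse by computing residues. You are slightly more explicit than the paper on one point: the descent from $\overline{\mathbb{Q}}(x,y)$ to $\mathbb{Q}(x,y)$ via Lemma \ref{lem: eventually alg is alg: rank 1 for dim 2}, which the paper leaves implicit.
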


\begin{proof} We assume (1). (Recall that condition (1) means that $D$ defines an algebraically integrable foliation.) We use Theorem \ref{thm: main theorem n>=2} applied to $n=2,x_1=x,x_2=y,f_1=f,f_2=g$, and we use the notation from therein. We open the cases of Theorem \ref{thm: main theorem n>=2} (3) implies (2) and specialize the answer with Lemma \ref{lem: explicit h for residues field of orders 1,2}.

If $L_1=L_2=0$, then we have the case where all residues are zero.

If $P_1=H_1=P_2=H_2=0$, then we can use Lemma \ref{lem: explicit h for residues field of orders 1,2}. 

If the field $L$ generated by the residues of $1/f,1/g$ is $\mathbb{Q}$, then we get the case (2b). Indeed, we apply the lemma to every summand of the PFD of $1/f,1/g$, Theorem \ref{thm: PFD}, and that's enough.

If the field $L$ generated by the residues of $1/f,1/g$ is a proper quadratic extension, then, in the same way as above, we get almost the case (2c). We miss the condition that all the $c$ we get shall satisfy $c+\overline{c}=0$ from the formulas for the numerators $h=cg' \overline{g}+ \overline{c}g\overline{g}'$. 
This follows from the condition that $V_f+V_g$ is of dimension $1$ over $\mathbb{Q}$. Indeed, the residues in this case are all these $c,\overline{c}$. For each pair, we get that $c=e\overline{c}$, where $e\in\mathbb{Q}$, so $e=-1$.  
This finishes the proof of (2).

We assume (2). The case (1) follows by a direct computation of (2) from Theorem \ref{thm: main theorem n>=2}, and then using that (2) implies (3) there. That (3) is our (1), so we are done.

Along the way, (2a) and (2b) to (1) will be explicit, but (2c) to (1) requires the following observation $\op{dim}(V_f+V_g)=1$ is satisfied because, if $L=\mathbb{Q}(\sqrt{q})$, then we get: $c\in \mathbb{Q}\sqrt{q}$, since $c+\overline{c}=1$, so it is of dimension $1$, and Theorem \ref{thm: main theorem n>=2} applies.

This finishes the proof.
\end{proof}

\begin{remark}
    With a bit more control, the proof of Theorem \ref{thm: explicit, n=2, both L, over rational number} can be adapted to work over any $k$, e.g., we can replace the rational $c$ next to $cf'/f$ with an element from the quadratic extension $L$ with $c+\overline{c}=0$ and other similar actions.
\end{remark}

\section{More Examples}
We present some examples and direct applications of our main theorems \ref{thm: main thm in intro}, \ref{thm: explicit main thm in intro}.

\subsubsection{Tori Invariance} Here is a rather surprising general corollary from the main theorem.

\begin{lemma}[Rational Tori Invariance]\label{lem: toric invariance}
    Let $D$ be a derivation of the form $D= \sum_{i=1}^n f_i(x_i)\frac{\partial}{\partial x_i}$ where $f_i\in \overline{\mathbb{Q}}(X)$ are nonzero rational functions, let $A_1,\ldots,A_n\in \mathbb{Q}$ be nonzero rational numbers. Then $D$ defines an algebraically integrable foliation if and only if
    $E=\sum_{i=1}^n A_if_i(x_i)\frac{\partial}{\partial x_i}$ does too.
\end{lemma}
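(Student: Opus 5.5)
The plan is to reduce everything to the combinatorial criterion (2) of Theorem \ref{thm: main theorem n>=2}, using the equivalence of (2) and (3) there, and to check that this criterion is invariant under replacing each $f_i$ by $A_if_i$ with $A_i\in\mathbb{Q}^\times$. By symmetry (the inverse scaling $A_i^{-1}$ is again rational and nonzero) it suffices to prove one implication: if $D$ satisfies (2), then $E$ satisfies (2).

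The first step is to record how the partial fraction decomposition transforms. We have $\frac{1}{A_if_i}=A_i^{-1}\frac{1}{f_i}$, and by the uniqueness in Theorem \ref{thm: PFD} this gives
\[
PFD\left(\tfrac{1}{A_if_i}\right)=A_i^{-1}P_i+A_i^{-1}L_i+A_i^{-1}H_i .
\]
So the polynomial, logarithmic and higher parts of $1/(A_if_i)$ are $A_i^{-1}P_i$, $A_i^{-1}L_i$ and $A_i^{-1}H_i$. The poles $\alpha_{i,j}$ are unchanged, and the residues become $A_i^{-1}c_{i,j}$. Since $A_i^{-1}\in\mathbb{Q}^\times$, the new residue space is $V_i'=\op{Span}_\mathbb{Q}(A_i^{-1}c_{i,j})_j=A_i^{-1}V_i=V_i$, because a $\mathbb{Q}$-subspace is stable under multiplication by a nonzero rational number.

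The second step is the case check. Each of the conditions $L_i=0$, $P_i=0$, $H_i=0$ is unchanged under multiplication by $A_i^{-1}\neq 0$. The spaces $V_i$ are literally equal, so $\dim V_i$ and $\dim(V_1+\cdots+V_n)$ are preserved. Hence (2a) holds for $D$ if and only if it holds for $E$, and likewise for (2b). Applying Theorem \ref{thm: main theorem n>=2} ((2)$\Leftrightarrow$(3)) to both $D$ and $E$ then finishes the proof.

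There is essentially no obstacle. The only points needing care are the appeal to uniqueness of the PFD, which justifies that scaling commutes with the decomposition, and the observation that rationality of the $A_i$ is exactly what keeps $V_i$ fixed; for irrational $A_i$ the spaces $V_i$ would change and the statement would fail (compare Example \ref{ex: folclor example}). As a sanity check, in case (2b) the explicit first integral $G_{1j}$ built from $E$ can be taken with the same $\eta$ and a suitably enlarged integer $N$, which absorbs the denominators of the $A_i^{-1}$.
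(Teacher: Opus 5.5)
Your proposal is correct and follows essentially the same route as the paper: both reduce, via the equivalence (2)$\Leftrightarrow$(3) of Theorem \ref{thm: main theorem n>=2}, to checking that condition (2) is unchanged under $f_i\mapsto A_if_i$, because $P_i,L_i,H_i$ and the residues are simply rescaled. Your bookkeeping is slightly more precise than the paper's: you note that the residues are scaled by $A_i^{-1}$ rather than $A_i$, and that each $V_i$ is literally the same $\mathbb{Q}$-subspace.
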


\begin{proof}
    By the main theorem \ref{thm: main theorem n>=2}, we know that $D$ is algebraically integrable if and only if condition (2) holds, which reads: 
    \begin{itemize}
        \item all $f_i$ must be of constant type. This is preserved by multiplying by constants.
        \item they must be compatible, either all $1/f_i$ have $L=0$ or all have $P=H=0$ and extra.
        \begin{itemize}
            \item If all have $L=0$, then this is preserved.
            \item If all have $P=H=0$, then this is preserved. Finally, the extra means that the residues of all $1/f_i$ form $1$ dimensional vector space over rational numbers. By multiplying by $A_i$, we only multiply these residues by $A_i$, so we do not change that dimension.
        \end{itemize}
    \end{itemize}
    This finishes the proof.
\end{proof}

\subsubsection{Ramen Lemma}

\begin{lemma}[Ramen Lemma]\label{lem: ramen lemma}
    Let $k$ be a field. Let $f\in k[x]$ be a separable irreducible polynomial over $k$ of degree $d>2$ with roots $a_1,\ldots,a_d$.
    Then the field $K\coloneqq k\left(f'(a_1),f'(a_2),\ldots,f'(a_d)\right)$
    satisfies $(K:k) > 2$.
\end{lemma}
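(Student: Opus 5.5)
The plan is to reduce the statement to Lemma \ref{lem: explicit h for residues field of orders 1,2} applied with $h=1$, and then to rule out both cases of that lemma by a degree argument. Throughout I assume $\op{char}(k)=0$, as in the rest of the paper. This hypothesis is genuinely needed: in characteristic $p$ an Artin--Schreier polynomial $x^p-x-t$ over $k(t)$ has constant derivative, which makes $K=k$.

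First, note that $K=k(f'(a_1),\ldots,f'(a_d))$ equals $k\left(\frac{1}{f'(a_1)},\ldots,\frac{1}{f'(a_d)}\right)$. Each $f'(a_i)\neq 0$ because $f$ is separable, so inversion preserves the generated field. By Lemma \ref{lem: residues via lhopital}, these inverses are exactly the residues of $1/f$. Since $\deg 1=0<d$, Lemma \ref{lem: explicit h for residues field of orders 1,2} applies with $h=1$. Suppose, for contradiction, that $(K:k)\le 2$. Then one of the two cases of that lemma must hold.

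\textbf{Case 1.} Here $1=cf'$ with $c\in k$. But $\deg f'=d-1\ge 2$ in characteristic zero, so $cf'$ is never the nonzero constant $1$.

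\textbf{Case 2.} Here $d=2m$, and since $d>2$ we get $m\ge 2$. We have $f=g\overline{g}$ over $K$ with $\deg g=\deg\overline{g}=m$, and
\[
1=cg'\overline{g}+\overline{c}g\overline{g}'.
\]
Take $f$ monic without loss of generality, so that $g$ and $\overline{g}$ are monic. The coefficient of $x^{d-1}$ on the right is $m(c+\overline{c})$. Since $d-1\ge 3>0$, this coefficient must vanish, so $\overline{c}=-c$ and $c\neq 0$. Hence $W\coloneqq g'\overline{g}-g\overline{g}'=1/c$ is a nonzero constant.

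Differentiating gives $0=W'=g''\overline{g}-g\overline{g}''$, that is, $g''\overline{g}=g\overline{g}''$. Because $f$ is separable, $g$ and $\overline{g}$ are coprime, so $g$ divides $g''$. Since $\deg g''<\deg g$, this forces $g''=0$. In characteristic zero that means $\deg g\le 1$, contradicting $m\ge 2$. Therefore $(K:k)>2$.

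The main point to check carefully is that Lemma \ref{lem: explicit h for residues field of orders 1,2} really yields $h=1$ in the stated form. The rest is elementary. The Wronskian trick (a constant Wronskian of coprime polynomials forces both to be linear) is the key step, and it is where characteristic zero and $d>2$ are used.
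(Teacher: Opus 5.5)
Your proposal is correct and follows the paper's proof step for step: you apply Lemma~\ref{lem: explicit h for residues field of orders 1,2} with $h=1$, rule out $1=cf'$ by degree, get $c+\overline{c}=0$ from the $x^{d-1}$ coefficient, differentiate to obtain $g''\overline{g}=g\overline{g}''$, and use coprimality to force $g''=0$. Your two additions are correct and tighten the argument. First, you check that $k(f'(a_i))=k(1/f'(a_i))$, so the lemma really applies. Second, you flag that characteristic zero is needed: your Artin--Schreier example shows the statement fails for an arbitrary field $k$ as written, and the paper's degree arguments use characteristic zero without saying so.
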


\begin{proof}
    We go by contradiction. Let us assume that $(K:k) \le 2$.
    
    We use Lemma \ref{lem: explicit h for residues field of orders 1,2} for the constant polynomial $h(x)=1$. 
    
    If $(K:k)=1$, then $1=cf'$ for some $c\in k$. This cannot be true, for degree reasons. 
    
    If $(K:k)=2$, then $1=cg' \overline{g}+ \overline{c}g\overline{g}'$, where $g$ can be taken to be a factor of $f=g\overline{g}$ over $K$ of degree $d/2$. In particular, $d$ is even, and thus $d\ge 4$. By comparing the coefficients next to $x^{d-1}$ we get $c+\overline{c}=0$, and after taking derivatives we get
    $
    g'' \overline{g}+g' \overline{g}'-g'\overline{g}'-g\overline{g}''=0.
    $, i.e., $g'' \overline{g}=g\overline{g}''$. However, $g,\overline{g}$ are coprime (they don't share any roots), so $g$ divides $g''$. This can be true only for $g''=0$, but $d>3$, so $g$ is not linear; therefore, we have a contradiction.
\end{proof}

\begin{remark}[Why ramen?]
    I like ramen. One of my favorite places is Menya Musashi in Kyiv; the one close to Palac Sportu Metro Station. You can get a ramen XXL there. Yummy!

    At that location, 
    with Asem Abdelraouf, we solved the Ramen lemma over dinner, in two different ways.

    I believe that for $k=\mathbb{Q}$, $h=1$, and $d\ge 3$ the field $K$ usually is equal to $L=k(a_1,\ldots,a_d)$, the splitting field of $f$, but this does not always happen. Indeed, for example, the polynomial $f(x)=x^6-2x^3+2$ have $(L:k)=12$, but $(K:k)=6$. So, $6>2$, i.e., the ramen lemma \ref{lem: ramen lemma} works, but $12\ne 6$. This example is due to Asem, and it can be verified with PARI or any other computational software. Initially, I believed that $L=K$ always holds. It does not.
\end{remark}

\begin{example}[$n=2$, Irreducible Monic Polynomial Coefficients]\label{Ex: Irreducible Monic Polynomial Coefficients}
    Let $f,g\in \mathbb{Q}[X]$ be two irreducible monic polynomials. 
    We give all such pairs such that $D=f(x)\frac{\partial}{\partial x}+g(y)\frac{\partial}{\partial y}$ is algebraically integrable. There are only two options:

    \begin{itemize}
        \item Both $f,g$ are linear polynomials.
        \item Both $f,g$ are quadratic polynomials with the same splitting field.
    \end{itemize}
    
    Proof. By Theorem \ref{thm: explicit, n=2, both L, over rational number}, $D$ is algebraically integrable if and only if we have that 
    
    \[
    \frac{1}{f}=\frac{cf'}{f}, \hspace{1cm} \text{or} \hspace{1cm} \frac{1}{f}=\frac{c(f_1'f_2-f_1f_2')}{f},
    \]
    where $c$ is a rational number in the first case, and a quadratic one in the second such that $c+\overline{c}=0$, and $f_1f_2=f$ over $L=\mathbb{Q}(c)$ are two conjugate factors of $f$.
    The same is true for $1/g$, and it has the same type as $1/f$.

    If both $1/f,1/g$ are of the first type, then they are forced to be linear polynomials, because $1=cf'$ can be true only in that case.

    If both $1/f, 1/g$ are of the second type, then they are of degree $d= 2$. Indeed, they must be of degree $d\ge 2$, because they split, and if either is of degree $d>2$, then by the Ramen lemma \ref{lem: ramen lemma}, we get a contradiction with Corollary \ref{cor: degree no more than 2}. Therefore, both polynomials have degree $2$. Thus, $f_1=(x-\alpha)$, $f_2=(x-\beta)$, and $c(f_1'f_2-f_1f_2')=c((x-\beta)-(x-\alpha))=c(\alpha-\beta)=1$.

    We observe, that $c(\alpha-\beta)=1=\overline{1}=\overline{c}(\beta-\alpha)$, so $c+\overline{c}=0$. Therefore, any quadratic polynomial works, but we need the $c$ for $f$ and the $c$ for $g$ to give the same quadratic field. So, if the splitting field of $f$ is that of $g$, then we are done.
\end{example}

\begin{remark}
    In the above example \ref{Ex: Irreducible Monic Polynomial Coefficients}, the assumption about being monic is irrelevant, because we have a rational toric invariance on our coefficients, Lemma \ref{lem: toric invariance}. Thus, WLOG, we can always make the coefficients monic.
\end{remark}

\subsubsection{The final first example} We started with an example. We will finish with an example. 

\begin{remark}[My lucky example]
    The following example \ref{ex: x^n+a} is the one I computed by hand first while exploring how hard it is to compute $p$-powers of derivations on products of curves; through this exercise, I convinced myself that this paper is possible. A posteriori, this example is simple, but it touches much of the theory from the previous sections; yet it does not require it to be fully developed, and it allows for some tricks that are not obvious or general, so it was a great lucky choice.
\end{remark}

\begin{example}\label{ex: x^n+a}
Let $n\ge 0$ be an integer, and let $a,b\in \mathbb{Q}$ be two rational numbers.

We consider the following derivations:
\[
D= (x^n+a)\frac{\partial}{\partial x} +(y^n+b)\frac{\partial}{\partial y}.
\]

We provide first integrals for it case by case.
\begin{enumerate}
    \item $n=0$: $F=(1+b)x-(1+a)x$ works. It is not constant if and only if $D\ne 0$.
    \item $n=1$: $F=\frac{x+a}{y+b}$ works.
    \item $n\ge 3$: This forces $a=b=0$. In this case we can take $F=\frac{1}{x^{n-1}}-\frac{1}{y^{n-1}}$.

    If $a\ne 0$ or $b\ne 0$, then we get a contradiction:
    
    If $a\ne 0$, then $x^n+a$ is not constant-type, Definition \ref{def: constant-type}. And this is a necessary condition.
    
    Indeed, since $n\ge 3$, we claim that there are infinitely many primes $p$ with $p>n$ such that $n$ divides one of $2p-1, 3p-1,\ldots, (n-1)p-1$, which follows from the Dirichlet theorem on primes in arithmetic progressions: since $n\ge 3$, Euler's totient function satisfies $\phi(n)\ge 2$. Let $1<d<n$ be an integer such that $(d,n)=1$.
    Then there exist infinitely many primes $p$ such that $nk+d=p$; thus there exist infinitely many primes $p(1/d)\equiv 1$ modulo $n$, and $1/d$ is congruent to one of $2,3,\ldots,n-1$ modulo $ n$. Thus, our claim.

    This means by Proposition \ref{prop: computing p-powers on curves}, that for infinitely many $p$ we have
    \[
    D^p=A(x^n+a)\frac{\partial}{\partial x} +B(y^n+b)\frac{\partial}{\partial y}
    \]
    with $A$ non-constant function in $x$. This means it is not $p$-closed for this $p$, so it is not algebraically integrable by Proposition \ref{prop: easy direction AlgIntConj}. Which, because the rank is $1$, means there is no non-constant rational function killed by $D$.

    \item $n=2$: By Theorem \ref{thm: main theorem n>=2} we need $a=b=0$, or both $a,b$ not zero, and $a/b$ a square.
    
    In the first case we can take $F=\frac{1}{x}-\frac{1}{y}$.
    In the second case, we need to know the residues. It is easy to compute them: $\frac{\pm 1}{2\sqrt{-a}}$ and $\frac{\pm 1}{2\sqrt{-b}}$. The condition on the dimension of $V$ being $1$ is that $a/b$ is a square of a rational number; then we can form an exponent of integrals that is a rational function killed by $D$.

    Here, we outline how one could get to the existence of $F$ a bit differently. It is natural to transform the question $D(F)=0$ into $dF=\omega$, where $\omega=\frac{dx}{x^2+a}-\frac{dy}{y^2+b}$, because this is a classical ODE trick. Thus, one can find an analytical $F$ by solving it directly. One gets:
    \[
    F=\frac{1}{\sqrt{a}}\op{arctan}(\frac{x}{\sqrt{a}})-\frac{1}{\sqrt{b}}\op{arctan}(\frac{y}{\sqrt{b}}).
    \]
    Now, if $a/b$ is a square, then $\op{tan}(\sqrt{a}F)$ is a non-constant algebraic function in $x,y$, because of the classical formulas for $\op{tan}(nx)$ in terms of $\op{tan}(x)$, and for $\op{tan}(x+y)$ in terms of $\op{tan}(x)$ and $\op{tan}(y)$. However, it might not be a rational function; it might be wild. Nevertheless, Lemma \ref{lem: eventually alg is alg: rank 1 for dim 2} implies that a non-constant rational function exists. 
\end{enumerate}
\end{example}

\bibliographystyle{alphaurl}
\bibliography{bib}

\end{document}